\newtheorem{theorem}{Theorem}
\newtheorem{lemma}[theorem]{Lemma}
\newtheorem{claim}[theorem]{Claim}
\newcommand{\Addresses}{{
  \bigskip
  \footnotesize
  
  Julian Sahasrabudhe, \textsc{Instituto Matem\'{a}tica Pura e Aplicada (IMPA), Rio de Janeiro, Brasil 
 \emph{and} Peterhouse, Cambridge, UK.}\par\nopagebreak
  \textit{julian.sahasra@gmail.com}
}}
\begin{document}
  
\def\N{\mathbb{N}}
\def\Z{\mathbb{Z}}
\def\t{\theta}
\def\eps{\varepsilon}

\title{ Counting Zeros of Cosine Polynomials: On a Problem of Littlewood}  

\author{Julian Sahasrabudhe}

\maketitle
\begin{abstract} 
We show that if $A$ is a finite set of non-negative integers then the number of zeros of the function  
\[ f_A(\theta) = \sum_{a \in A } \cos(a\theta) ,
\] in $[0,2\pi]$, is at least $(\log \log \log |A|)^{1/2-\varepsilon}$. This gives the first unconditional lower bound on a problem of Littlewood and solves a conjecture of Borwein, Erd\'{e}lyi, Ferguson and Lockhart. We also prove a result that applies to more general cosine polynomials with few distinct rational coefficients. One of the main ingredients in the proof is perhaps of independent interest: we show that if $f$ is an exponential polynomial with few distinct integer coefficients and $f$ ``correlates'' with a low-degree exponential polynomial $P$, then $f$ has a very particular structure. This result allows us to prove a structure theorem for trigonometric polynomials with few zeros in $[0,2\pi]$ and restricted coefficents.
\end{abstract}

\onehalfspace

\section{Introduction} 
We call a sum of the form
\[ \sum_{a = 0 }^N C_a \cos(a\theta), \]
a \emph{cosine polynomial}, where $\theta \in [0,2\pi]$, $C_a \in \mathbb{R}$ and $N \in \mathbb{N}$. Sums of this type have been a central topic of study in mathematics since and before the foundational work of Joseph Fourier in 1807, who made the remarkable discovery that any (nice enough) periodic function can be realized as the superposition of a countable number of sine-waves. Since these early results, the rich interplay between the behavior of the function $f$ and its Fourier coefficients $C_1,\ldots,C_N$ have received considerable attention and have found numerous applications throughout and beyond mathematics. 
\paragraph{}
The starting point of a rich field of study begins with the following question: What can be said about the behavior of a cosine polynomial if we impose restrictions on its coefficients? Or, similarly, what can be said about the behaviour of a function $f$ if we restrict its Fourier transform? Of course, if we impose \emph{no} conditions on the coefficients $C_0,\ldots,C_N$, essentially nothing can be said. Indeed, it is well known that any continuous, even function $f: \mathbb{R}/(2\pi\mathbb{Z}) \rightarrow \mathbb{R}$ can be uniformly approximated by cosine polynomials. Thus, for large $N$, the behavior of such polynomials is essentially as rich as the class of continuous functions. In what follows, we consider what can be said about the behavior of the function $f$ if the coefficients are restricted to a small set. 
\paragraph{}
One of the early champions of questions about polynomials with restricted coefficients was J.E. Littlewood, who studied many problems of this general type and stimulated the field with a wealth of problems \cite{JELw1,JELw2,JELw3,JELw4}. Perhaps most famously, Littlewood asked for the minimum $L_1$-norm attained by a function of the form $\sum_{a\in A} \cos(a\theta)$, where $A \subseteq \mathbb{N} \cup \{0\}$ is of a given size. This question was resolved (up to constants) in 1981 independently by Konyagin \cite{Konyagin}, and McGehee, Pigno, and Smith \cite{MPS} after a series of partial results were obtained by Salem, Cohen, Davenport and Pichorides \cite{Salem, Cohen, Davenport,Pich}. We note that these estimates have seen numerous applications and modifications in recent years. 
\paragraph{}
Counting zeros of polynomials with restricted coefficients has also received considerable attention; after the seminal work of Littlewood and Offord \cite{LitOff}, several results have been proved on the number of roots of ``typical'' polynomials with restricted coefficients. For example, Kac \cite{Kac1,Kac2} has given an exact integral formula for the expected number of roots of a degree $N$ polynomial with coefficients sampled identically and independently from a Gaussian distribution. In a similar vein, Erd\H{o}s and Offord \cite{ErdosOfford} showed that almost all polynomials of the form $\sum_{i=0}^N \varepsilon_i x^i$, where $\varepsilon_0,\ldots, \varepsilon_N \in \{-1,+1\}$, have $(\frac{2}{\pi} + o(1)) \log N$ real roots. These results have been subsequently refined and extended by various authors. For more, we refer the reader to the recent paper of Hoi Nguyen, Oanh Nguyen and Van Vu \cite{VVu}. 
\paragraph{}
The study of extremal questions on the number of real zeros of polynomials with restricted coefficients began with the work of Bloch and P\'{o}lya \cite{BlochPolya} in 1932, who studied the maximum number of real zeros attainable by polynomials of the form $\sum_{i=0}^N \varepsilon_i x^i$, $\varepsilon_0,\ldots, \varepsilon_N \in \{-1,0,+1\}$. After improvements by other authors \cite{Schur,Szego}, Erd\H{o}s and Tur\'{a}n proved a landmark result in the study of roots of polynomials. They showed that any polynomial with sufficiently ``flat'' coefficients will have roots that are radially ``equidistributed'' in the complex plane \cite{ErdosTuran}. This result has seen numerous extensions, for which we refer the interested reader to the survey of Granville \cite{AGran}.  Odlyzko and Poonen \cite{OP} have proved several results on the geometric structure of the zero set in $\mathbb{C}$ of polynomials with $0,1$ coefficients.\ Questions about the number of zeros in polygons and annuli in the complex plane have also been studied \cite{BE1,BEL}.
\subsection{Main results}
The problem that we study in this paper has its origins in an old question of Littlewood on cosine polynomials with $0,1$ coefficients. For a finite set $A \subseteq \mathbb{N} \cup \{0\}$, put $N = |A|$ and let 
\[ f_A(\theta) = \sum_{a \in A } \cos(a\theta) ,
\] for $\theta \in [0,2\pi]$. In his 1968 monograph \cite{JELw4}, Littlewood asked for the lower bound on the number of zeros of the function $f_A(\theta)$ in terms of $N$. He conjectured that the number of such zeros is ``possibly $N-1$ or not much less''. In 2008, Borwein, Erd\'{e}lyi, Ferguson and Lockhart \cite{BEFL} disproved this conjecture by showing the existence of cosine polynomials, of the above form, with at most $CN^{5/6}\log N$ roots in a period. They went on to conjecture that the number of zeros of $f_A$ tends to infinity as $N$ tends to infinity.
\paragraph{}
Progress on Littlewood's question of \emph{lower bounds} was achieved by Borwein and Erd\'{e}lyi \cite{BElowerbounds} who showed that if $A \subseteq \mathbb{N} \cup \{0\}$ is an \emph{infinite} set and $A_n = A \cap [1,n]$, for each $n \in \mathbb{N}$, then the number of zeros of $f_{A_n}$ tends to infinity with $n$. Interestingly, their argument is both ineffective and depends crucially on the nested structure of the sets $A_n$ and thus does not imply that the roots of a general sequence of polynomials tends to infinity. In this paper we answer the question of Borwein, Erd\'{e}lyi, Ferguson and Lockhart in a strong form: by giving an explicit lower bound on the number of zeros for Littlewood's problem.
\begin{theorem} \label{thm:main}
For a finite set $A \subseteq \mathbb{N} \cup \{0\}$, the cosine polynomial 
\[ f_A(\theta) = \sum_{a \in A } \cos(a\theta) , \] has at least $(\log \log \log |A|)^{1/2 + o(1)}$ distinct roots in $[0,2\pi]$. \end{theorem}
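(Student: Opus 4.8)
I would prove the contrapositive in quantitative form: if $f_A$ has at most $k$ distinct roots in $[0,2\pi]$ then $|A|\le\exp\exp\exp\!\big(k^{2+o(1)}\big)$, which rearranges to the stated bound. Fix $A$ and suppose $f_A$ has at most $k$ roots. Writing $x=\cos\theta$ and using $\cos(a\theta)=T_a(\cos\theta)$ for the Chebyshev polynomials $T_a$, we have $f_A(\theta)=g_A(\cos\theta)$ where $g_A=\sum_{a\in A}T_a$ has degree $\max A$. Since $\theta\mapsto\cos\theta$ is two-to-one from $(0,2\pi)$ onto $(-1,1)$, the polynomial $g_A$ has at most $k$ sign changes on $(-1,1)$; calling them $x_1<\dots<x_m$ (with $m\le k$), set $Q(x)=c\prod_{i=1}^m(x-x_i)$ with $c>0$ and $P(\theta)=Q(\cos\theta)$, a cosine polynomial of degree $m\le k$. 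Then $P(\theta)f_A(\theta)\ge 0$ for all $\theta$ (the sign $c>0$ matches that of $g_A(1)=|A|>0$) and $P(0)=Q(1)>0$. This is the sense in which $f_A$ ``correlates'' with a low-degree polynomial $P$: it has the same sign as $P$ everywhere, and so, after a harmless normalization (at worst a $2^{O(k)}$ factor, absorbed by the eventual triple-exponential bound), Fej\'{e}r--Riesz gives $P f_A=|h(e^{i\theta})|^2$ for a polynomial $h$ with $\deg h\le\max A+k$.

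\textbf{Step 2: the structure theorem.} View $f_A(\theta)=\tfrac12\sum_{a\in A}\big(e^{ia\theta}+e^{-ia\theta}\big)$ as an exponential polynomial whose coefficients take a single value and which correlates with the low-degree exponential polynomial $P$. The structure theorem advertised in the abstract then pins $f_A$ to a rigid shape. I expect it to assert that, up to an error small in a suitable norm on a suitable sub-arc of the circle, $f_A(\theta)=P(\theta)\,g(M\theta)+(\text{error})$ for a positive integer $M$ and a cosine polynomial $g$ again having few distinct coefficients; equivalently, after discarding a controlled exceptional part, $A$ is contained in $O(|B|)$ blocks of length $O(k)$ whose positions are multiples of $M$, where $B$ is the analogously structured frequency set of $g$.

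\textbf{Step 3: exploit the structure and iterate.} If $M$ were large, then $g(M\theta)$ would have roughly $M$ times as many sign changes on $[0,2\pi]$ as $g$ does; since a zero-mean cosine polynomial changes sign, $f_A$ would then have $\gtrsim M$ roots, forcing $M=O(k)$. With $M$ bounded, $f_A\approx P(\theta)\,g(M\theta)$ realizes $f_A$ as essentially a cosine polynomial of degree $O(k\deg g)$, whence $|A|\lesssim k\,|B|+(\text{error})$, and one recurses on $g$, which is genuinely simpler (it carries fewer ``scales''). Tracking how the admissible error, and the parameters governing it, degrade at each pass yields a bound of the form $|A|\le\exp^{(r)}\!\big(\mathrm{poly}(k)\big)$ for a small integer $r$; the arithmetic of this degradation --- in particular the quadratic loss from pairing the polynomial $P$, of degree at most $k$, against the frequencies of $f_A$ --- is what I expect to produce $r=3$ exponentials and the exponent $k^{2+o(1)}$, and hence the theorem.

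\textbf{The main obstacle} is the structure theorem itself: showing that sign-correlation (or $L^1$-correlation) of an exponential polynomial with few distinct coefficients against a low-degree exponential polynomial forces the dilation-times-low-degree shape, with error terms strong enough both to drive the recursion and to guarantee that roots are neither spuriously created nor destroyed along the way. My approach would be to start from the factorization $P f_A=h(z)\,\bar h(1/z)$, use the self-reciprocal symmetry of $P f_A$ to constrain the zeros of $h$ on and off the unit circle, and then run a pigeonhole / energy-increment argument to locate the large common difference $M$; it is this pigeonhole --- over exponentially many candidate configurations --- that I expect to be responsible for the bulk of the (admittedly very slow) triple-logarithmic strength of the final bound.
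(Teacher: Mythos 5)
Your Step 1 is sound and matches the paper's opening move: the companion polynomial $P$ of degree $O(k)$ with $Pf_A\ge 0$, hence $\left|\int Pf_A\right|\ge\int|Pf_A|$, is exactly how the paper converts ``few roots'' into ``correlation with a low-degree polynomial.'' But from there on the proposal has a genuine gap: the structure theorem, which you yourself identify as the main obstacle, is never proved --- it is only conjectured, and in a form that differs from what is actually true and usable. The paper's structural statement is not ``$f_A\approx P(\theta)\,g(M\theta)$ for a single dilation $M$'' but rather that after multiplying by a suitable low-degree polynomial the Fourier support becomes very sparse, whence $f$ splits into boundedly many pieces whose coefficient sequences are \emph{periodic on long integer intervals} (plus a sup-norm-controlled error). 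The proof of that statement rests on two ingredients entirely absent from your plan: (i) the McGehee--Pigno--Smith strong form of the solution to Littlewood's $L_1$-conjecture (the Hardy-type inequality $\int fh\ge\frac{1}{60}\sum|C_r|/r$), which is the only quantitative engine in the whole argument; and (ii) a linear-algebra dichotomy on the spaces of $d$-windows of the coefficient sequence --- full rank gives a Cramer-rule contradiction, deficient rank gives a linear-recurrence representation whose characteristic roots are forced (via cyclotomic degree/totient bounds) to be roots of unity of bounded order, i.e.\ periodicity. Your proposed substitute (Fej\'er--Riesz factorization $Pf_A=|h|^2$, symmetry constraints on the zeros of $h$, and an unspecified ``pigeonhole / energy-increment'' to find $M$) is a research direction, not an argument; nothing in it explains why correlation with a low-degree polynomial forces any rigidity at all, nor where a quantitative bound would come from.

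The endgame also does not work as described. Your Step 3 claims that large $M$ would force $\gtrsim M$ roots because ``a zero-mean cosine polynomial changes sign,'' but the approximation is only up to an error term that can destroy sign changes, $g$ need not have mean zero, and the proposed recursion on $g$ (with the assertion that it terminates after $r=3$ exponential losses) is unsubstantiated --- the triple logarithm in the theorem does not arise from a recursion depth. The paper instead finishes non-recursively: for structured polynomials it multiplies by an explicit bounded polynomial to average out the periodic coefficient blocks, writes the result in terms of the kernels $D_n(\theta)=\sum_{r\le n}\cos(r\theta)$, and plays the uniform bound $\bigl|\int_J D_n\bigr|\le 10$ on each interval of constant sign against the MPS lower bound $\int|g|\gtrsim\log|f(0)|$, so that many sign-constant intervals are needed. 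Without the $L_1$-norm lower bound (or an equivalent), your outline has no mechanism that converts the structure into a growing number of zeros, so the contrapositive bound $|A|\le\exp\exp\exp(k^{2+o(1)})$ is not established.
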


\paragraph{}
We prove Theorem~\ref{thm:main} by way of a more general result. For a finite set of real numbers $R$, define $M(R) = \max\{ |x| : x \in R \}$. We take all logarithms to be in \emph{base 2} and for $r \in \mathbb{N}$ we adopt the notation $\log_{(r)}$ for the $r$th-iterated logarithm.

\begin{theorem} \label{thm:main2} For $n \in \N$, and finite $R \subseteq \Z$, there exists constants $C, C_R  > 0 $ so that the following holds. 
If \[ f(\theta) = \sum_{r = 0}^n C_r\cos(r\theta),
\] is a cosine polynomial with $C_r \in R$, for all $r \in [0,n]$, and $|f(0)| \geq C(\log M(R))^2$ then, $f$ has at least $C_R \left( \frac{ \log_{(3)} |f(0)| }{\log_{(5)} |f(0)| }\right)^{1/2}$ zeros in $[0,2\pi]$.
\end{theorem}

Perhaps it is curious that our bound on the number of zeros depends on $|f(0)|$ (or equivalently $\left| \sum_{r} C_r \right|$) rather than on the number of non-zero terms. However, as we will 
briefly touch on in subsection~\ref{subsec:StructureThm}, a bound depending on the number of non-zero terms is too much to ask for; we really are required to use a more subtle parameter (like $|f(0)|$) in our lower bounds.

\paragraph{}
The first main step in our proof of Theorem~\ref{thm:main2} is to show that every exponential polynomial (with coefficients from a fixed finite, rational set) that ``correlates'' with a low-degree exponential polynomial $P$ (with general coefficients) must have a very particular structure. Putting it more precisely, if $f$ has coefficients from a finite set $R \subseteq \mathbb{Z}$ and there exists a ``low-degree'' exponential polynomial $P$ for which $\left| \int Pf \right| \geq \varepsilon \int |Pf|$, then $f$ can be written as a sum of a bounded number of exponential polynomials $f = f_1 + \cdots + f_L$, where each of the $\hat{f}_i(r)$ are supported on integer intervals $I_i$ and $(\hat{f}_i(r))_{r \in I_i}$ are periodic with period bounded in terms of $\varepsilon, P, R$ only. To state our result precisely, we let $R \subseteq \Z$ be a finite set, $k \in \Z$, and $\eps >0$. We define the functions $\beta_0(R,k,\eps)$, $\beta_1(k)$ to be
\[ \beta_0(R,k,\eps) = 2^{\eps^{-1}|R|^{(Ck^2\log\log k )}} \]
\[ \beta_1(k) = Ck\log\log k, 
\] where $C$ is a sufficiently large constant. 
\begin{theorem}  \label{thm:CorWithPolyThm}
For $\varepsilon >0$, $k,n \in \mathbb{N}$, let $R \subseteq \mathbb{Z}$ be a finite set and let $f$ be an exponential polynomial with $\hat{f}(r) \in R$, for all $r \in \mathbb{Z}$.
If there exists a non-zero, exponential polynomial $P$ with $\deg(P) \leq k$ such that 
\[ \left| \int Pf \right| \geq \varepsilon \int |Pf|, 
\] then there exists a positive integer $L \leq \beta_1(R,k,\eps) $ 
exponential polynomials $Q_1,\ldots,Q_L$, with $\deg(Q_j) \leq \beta_1(k)$, for all $j \in [L]$, and 
$N_1,\ldots, ,N_L,M_1,\ldots,M_L \in \Z$ so that 
\[ f(\theta) = \sum_{j=1}^L Q_j(\theta)\left(\frac{ e^{iN_j\theta} - e^{iM_j\theta} }{1 - e^{i (p_j+1) \theta}} \right). 
\] Here the $o(1) = o_k(1)$ terms tend to zero as $k$ tends to infinity. \end{theorem}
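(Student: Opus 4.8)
The plan is to analyze the correlation condition $|\int Pf| \geq \varepsilon \int |Pf|$ via Fourier-analytic and arithmetic arguments, exploiting that $\hat f$ takes only finitely many integer values. First I would record that the correlation hypothesis, after normalizing $P$ and expanding $\int Pf$ in terms of Fourier coefficients, forces a large portion of the ``mass'' of $Pf$ to be concentrated and sign-coherent; since $\deg P \le k$, the Fourier support of $P$ has size at most $k+1$, so $\widehat{Pf}(m) = \sum_{j} \hat P(a_j)\hat f(m-a_j)$ is a short linear combination of values of $\hat f$. The correlation inequality should be massaged (perhaps via a pigeonhole over the argument of the complex numbers $\widehat{Pf}(m)$, or via the triangle-inequality deficit) into the statement that for a positive-density set of $m$, the quantity $\widehat{Pf}(m)$ lies in a narrow sector of $\mathbb{C}$; combined with $\hat f(r) \in R \subseteq \mathbb{Z}$ finite, this is a strong rigidity constraint on consecutive windows of $\hat f$.

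Next I would set up a ``local structure propagates'' argument: knowing $\widehat{Pf}(m)$ for a dense set of $m$ essentially determines a linear recurrence (with characteristic polynomial related to $P$) satisfied by $\hat f$ on long runs of integers. The key idea is that an integer-valued sequence satisfying, on a long interval, a fixed linear recurrence of order $\le k$ with bounded coefficients — and taking values in the finite set $R$ — must be eventually periodic on that interval, with period bounded in terms of $k$, $|R|$, and $M(R)$ (this is the classic fact that a bounded integer solution of a linear recurrence is periodic, with an effective period bound coming from counting the at most $|R|^{k}$ possible state vectors). Summing the geometric progressions of exponentials corresponding to each such periodic run yields exactly the closed form $Q_j(\theta)\big(e^{iN_j\theta}-e^{iM_j\theta}\big)/\big(1-e^{i(p_j+1)\theta}\big)$, where $p_j+1$ is the period of the $j$th run and $Q_j$ packages one period's worth of coefficients, so $\deg Q_j = p_j$. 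I would then need to bound the number $L$ of runs: each maximal run where the recurrence/periodicity holds contributes one term, and the number of ``break points'' is controlled by how often the sector/correlation condition can fail, which is where the $\log_{(2)} L \le k^{2+o(1)}\log|R| + 8k\log M(R) + \log\varepsilon^{-1}$ bound comes from — the doubly-logarithmic dependence strongly suggests an iterated or tensor-power/energy-increment scheme, so I would expect to apply the argument to $f$ raised to some power or tensored with itself, converting a modest gain per step into the stated tower-type bound.

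The main obstacle, I expect, is precisely this last step: extracting a quantitatively strong bound on $L$ (and on the periods $p_j \le k^{1+o(1)}$) rather than a merely qualitative ``finitely many pieces.'' Getting from ``correlates'' to ``bounded number of periodic pieces'' qualitatively is relatively soft, but the explicit $k^{2+o(1)}$ and $\log|R|$ dependencies require carefully tracking how the finite-value constraint interacts with the degree-$k$ recurrence — in particular ruling out many short runs separated by many defects, which presumably needs an amplification argument (e.g., replacing $P$ by a product of shifts, or passing to $f \ast f$ or a high tensor power) so that a small correlation for $f$ becomes a controllable correlation at the amplified level, at the cost of only a logarithm in the final estimate. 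I would also need to be careful that the $o_k(1)$ terms are genuinely uniform in $R$, $\varepsilon$, and $f$, which means the period bound from the recurrence must be stated with fully explicit constants before any asymptotic simplification.
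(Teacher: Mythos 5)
There is a genuine gap: your proposal never identifies the analytic engine that actually converts the correlation hypothesis into a quantitative bound on $L$. In the paper this is the Konyagin--McGehee--Pigno--Smith solution of Littlewood's $L_1$-conjecture: one multiplies $Pf$ by the auxiliary polynomial $S_D(\theta)=\prod_{r=1}^{D}(1-e^{ir\theta})$ and applies the MPS bound to get $\int|Pf|\gtrsim |S_D|_\infty^{-1}\sum_{i}|\widehat{PS_Df}(b_i)|/i$, summed over the support $B=\{b_1<\cdots<b_m\}$ of $\widehat{PS_Df}$. The hypothesis $|\int Pf|\ge\varepsilon\int|Pf|$ then caps this harmonic sum by a multiple of $I=\frac{1}{2\pi}|\int Pf|$, and a dichotomy (``if $|\widehat{PS_Df}(r)|$ is small on a long run it is actually zero on most of it'') --- proved via Cramer's rule for invertible integer matrices in the full-rank window case, and via the periodicity-killing property of $S_D$ in the low-rank case --- forces $\log\log|B|\le k^{2+o(1)}\log|R|+8k\log M(R)+\log\varepsilon^{-1}$. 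The $1/i$ weights in MPS are exactly where the double logarithm comes from; your suggested tensor-power/amplification scheme is a guess with no mechanism behind it, and your ``sector/sign-coherence'' reading of the correlation hypothesis is not used and does not obviously lead to any support bound. Without something playing the role of MPS, the proposal cannot produce the stated bound on $\log_{(2)}L$.

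The second concrete problem is your period bound. You propose to get periodicity of $\hat f$ on long runs by counting state vectors, which gives a period of order $|R|^{k}$; but the theorem requires $\deg(Q_j)=p_j\le k^{1+o(1)}$, \emph{independent} of $|R|$ and $M(R)$, and the paper stresses that this independence is crucial. In the paper the pigeonhole on windows (which is where $|R|^{k}$ enters) is used only to find one repeated window on a sufficiently long interval; the conclusion is then that $\hat f(r)=\sum_i\alpha_i\rho_i^r$ where the $\rho_i$ are roots of unity that are also roots of a rational polynomial of degree $O(k)$ (coming from the recurrence/orthogonality with the coefficient vector of $S_D^2$ or of the window relation). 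Since the cyclotomic polynomial of a primitive $q$th root of unity divides that polynomial, $\phi(q)\le O(k)$, and the totient lower bound $\phi(q)\ge q/(8\log\log q)$ gives $q\le O(k\log\log k)=k^{1+o(1)}$. That arithmetic step (cyclotomic degree plus totient bound), together with the kernel analysis of the twisted difference operator which forces the polynomial factors $Q_i(n)$ in the recurrence solution to be constants, is what delivers the correct period bound; it is absent from your outline, and the state-counting substitute would not prove the theorem as stated.
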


Once we have established Theorem~\ref{thm:CorWithPolyThm}, it will be easy to see that a potential counter-example to our Theorem~\ref{thm:main2} must have quite a bit of structure. The second main step in our proof is to deduce that all appropriate cosine polynomials with this structure must have many zeros.

\subsection{A structure theorem for trigonometric polynomials with few roots} \label{subsec:StructureThm}
Perhaps surprisingly, there are trigonometric polynomials that look suspiciously similar to those in Theorem~\ref{thm:main} but have a \emph{bounded} number of zeros.
For example, the simply described sine polynomial 
\[ f_n(\t) = 2\sin(\t) +  \sum_{r = 1}^{2n} \sin((2r+1)\t),  \] 
has exactly 2 zeros for $\t \in [0,2\pi]$. It is not hard to get at the crux of what is going on: while the sine polynomial $S_n(\t) = \sum_{r=1}^n \sin(r\t)$ has many zeros itself, the sign changes with $\t \approx 0 $ are not very dramatic. Indeed $S_n(\t) > -\t$ as $\t$ approaches zero from the right. Hence adding $\sin(\t)$
to this sum (to form $\tilde{f}_n = 2\sin(\t) + \sum_{r=2}^n \sin( r\t) $) kills all of these zeros with $\t \approx 0$. Now, $\tilde{f}_n(\t)$ still has many zeros with $\theta \approx \pi$, but this small wrinkle can be eliminated 
by defining 
\[ f_n(\t) = \frac{1}{2}\left(\tilde{f}_{2n}(\t) + \tilde{f}_{2n}(\pi - \t)\right), \]
thus bringing us to $f_n$, the example mentioned above.

While it might seem that this situation is limited to trigonometric series involving sine, one can easily symmetrize this example
to produce \emph{cosine} series with few zeros. Indeed, the cosine series
\[ g_n(\t) = 2\cos(\t) +  \sum_{r=2}^{2n} \sin(r\pi/2)\cos(r\t),
\]  with coefficents in $\{0,1,-1,2\}$, is easily seen to have only $2$ zeros. This example is really the same as the previous for 
\[ g_n(\t) = \frac{1}{2}\left(f_n(\t + \pi/2) + f_n(-\t + \pi/2)\right).
\] While numerous other examples of trigonometric polynomials with few coefficients can be obtained in a similar way,
our Theorem~\ref{thm:CorWithPolyThm} will allow us to deduce that the highly periodic structure is no coincidence; every real trigonometric polynomial with few zeros \emph{must} have this periodic structure. We state the theorem here without explicit bounds on the relevant constants. However, if the reader is interested, these can (essentially) be read off from Theorem~\ref{thm:CorWithPolyThm}. 
\begin{theorem}
Let $R \subseteq \Z$, $|R| < \infty$ and $K \in \N$, there exists $C = C(K,R) \in \N$ so that the following holds. If 
\[ f(\theta) = \sum_{r = 1}^n A_r\cos(r\t) + \sum_{r=1}^n B_r\sin(r\t), 
,\] is a trigonometric polynomial with at most $K$ roots and with $A_r,B_r \in R$, for all $r$, then there exists an integer $L < C$ and disjoint intervals $I_1, \ldots, I_L$, with $\bigcup_{i=0}^L I_i = [n]$
so that for all $i \in [L]$, the sequences $(A_r)_{r \in I_i}$ and $(B_r)_{r \in I_i}$ are periodic with period at most $C$. \end{theorem}
\paragraph{}
The example $g_n(\t)$ is also interesting for another reason; it shows that a trigonometric polynomial with many non-zero coefficients may have few zeros, even among cosine polynomials with coefficients restricted to a finite set of integers. Note however, that this example does not contradict our Theorem~\ref{thm:main2}, as $|g_n(0)|$ remains bounded as $n$ tends to infinity. 
\paragraph{}
Our results also make progress towards answering the related question for cosine polynomials of the type $\sum_{a = 0}^N \varepsilon_a \cos(a\theta)$, where $\varepsilon_0,\ldots,\varepsilon_N \in \{-1,+1\}$. This is often stated as an equivalent question about ``reciprocal polynomials'' \cite{BEL}. In this setting, it is also believed that the number of zeros tends to infinity with the degree \cite{Mukunda}. So far, several partial results have been obtained. First, Erd\'{e}lyi \cite{Erdelyi} observed that a polynomial of this form has at least one real zero. This was later strengthened by Mukunda \cite{Mukunda}, who showed that odd degree polynomials of this type have at least $3$ roots. Drungilas \cite{Drun} extended this result by showing that such polynomials of degree $d$ have at least $5$ roots if $d \geq 7$ and even, while such polynomials have at least $4$ roots if $d \geq 14$ and odd.  Our results show that potential counter-example to the conjecture for coefficients in $\{-1,+1\}$ must have a very particular periodic form, as in Theorem~\ref{thm:CorWithPolyThm}
\subsection{Organization and notes}
Our paper is organized as follows. In Section~\ref{sec:preliminaries} we prove some basic results which are of a linear-algebraic nature. In Section~\ref{sec:MainLemmas} we state the celebrated solution to the Littlewood Conjecture regarding $L_1$-norms of exponential polynomials, a result which plays a key role in our proof. We also prove several key lemmas. In Section~\ref{sec:reduction} we prove Theorem~\ref{thm:CorWithPolyThm}, our structural result on exponential polynomials which correlate with low-degree polynomials and our first main step towards Theorem~\ref{thm:main2}. We also note how this result allows us to deduce a strong structural result for cosine polynomials having few zeros. In Section~\ref{sec:theStructuredCase} we show that cosine polynomials which admit this structure must have many roots, our second main step. Section~\ref{sec:ProofsOfThms} houses the final deduction of Theorem~\ref{thm:main2} by using the various results built up in previous sections.
\paragraph{}
This result was originally announced in April 2015 as a part of the nomination announcement for the Ralph Faudree award at the University of Memphis and was first presented in a lecture at the Institute for Advanced Studies (IMT), Lucca, Italy, in a seminar on May 23rd of 2015.  Recently it has been been brought to our attention that Erd\'{e}lyi \cite{TEFit} has shown that if $f_n$ is a sequence of cosine polynomials with coefficients from a finite set of real numbers and $|f_n(0)|$ tends to infinity then the number of roots of $f_n$ tends to infinity. While Erd\'{e}lyi's argument is ineffective and gives no concrete bound on the number of zeros of any given cosine polynomial, it gives another proof of the conjecture of Borwein, 
Erd\'{e}lyi, Ferguson and Lockhart. Erd\'{e}lyi's paper first appeared as a pre-print in February of 2016.

\section{Preliminaries}~\label{sec:preliminaries}
For integers $n<m$, let $[n,m] = \{ n, n+1,\ldots, m \}$ and if $n \geq 1 $, put $[n] = \{ 1,\ldots, n\}$. As is standard, for an exponential polynomial $f : \mathbb{R}/ ( 2 \pi \mathbb{Z} ) \rightarrow \mathbb{C}$, we define the \emph{Fourier coefficients} of $f$ to be the sequence $\left(\hat{f}(r)\right)_{r \in \mathbb{Z}}$, where 
\[ \hat{f}(r) = \frac{1}{2\pi} \int_{0}^{2\pi} f(\theta)e^{-ir\theta} d\theta,
\] for $r \in \mathbb{Z}$. If $f$ and $g$ are exponential polynomials, it is trivial to check \emph{Parseval's formula} and the \emph{inversion formula}. That is,  
\[ \frac{1}{2\pi} \int_{0}^{2\pi} f(\theta)\overline{g(\theta)} = \sum_{r \in \mathbb{Z}} \hat{f}(r)\overline{\hat{g}(r)}\ \text{    and }\] 
\[ f(\theta) = \sum_{r \in \mathbb{Z}} \hat{f}(r)e^{ir\theta}.
\]For $N \in \mathbb{N}$, $d \in [N]$ and $x \in \mathbb{C}^N$, define the collection of $d$-\emph{windows} of $x$ to be the set of vectors 
\[ \{ (x(1+r),\ldots,x(d+r)) \}_{r =0}^{N-d} \subseteq \mathbb{C}^d. \] We refer to the $\mathbb{C}$-vector space spanned by the $d$-windows of $x$ as \emph{the space of} $d$-\emph{windows of} $x$. In the course of the proof, we shall proceed differently depending on whether or not certain spaces of $d$-windows have full rank. 
\paragraph{}
For the case that the space of $d$-windows of $x$ fails to have full rank, we write $(x(n))_{n=1}^N$ in the form
\begin{equation} \label{equ:fromofx(n)} x(n) = \sum_{i =1}^l \rho_i^nQ_i(n),
\end{equation} for all $n \in [N]$, where $\rho_1,\ldots,\rho_l$, $l \in [d]$ are appropriate complex numbers and $Q_i(n)$ are polynomials with bounded degree. Of course, this is very similar to the well-known results on the space of solutions to a linear recurrence relation. Here we quickly note a finitary version of this result. 

For $\rho \in \mathbb{C}\setminus \{0\}$, $N \in \mathbb{N}$ and $k \in \mathbb{N}\cup \{0\}$, we define the vector $b_N(\rho,k) \in \mathbb{C}^N$ by  $b_N(\rho,k) = (\rho^nn^k)_{n=1}^N$. For $m \in \mathbb{N}$, we define $S_N(\rho;m) = \{b_N(\rho,0),\ldots,b_N(\rho,m-1) \}$. We note the following fact about the linear independence of these vectors. 
\begin{lemma} \label{lem:LinInd} Let $l, m_1,\ldots,m_l \in \mathbb{N}$ and $\rho_1,\ldots,\rho_l \in \mathbb{C}\setminus \{0\}$. If $N\geq \sum_{i=1}^l m_i$ then 
$S_N(\rho_1,m_1) \cup \cdots \cup S_N(\rho_l,m_l)$ is a linearly independent collection over $\mathbb{C}$. \qed 
\end{lemma}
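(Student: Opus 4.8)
The plan is to reduce the statement to a claim about polynomials and then argue by induction on $l$ using a discrete analogue of repeated differentiation. Since a collection of vectors that is linearly independent after truncation to the first $D$ coordinates is a fortiori linearly independent in $\mathbb{C}^N$ for every $N \ge D$, it suffices to treat the case $N = D := \sum_{i=1}^l m_i$ (and we may assume, as is implicit in the statement, that $\rho_1,\dots,\rho_l$ are pairwise distinct). Suppose then that $\sum_{i,k} c_{i,k}\, b_D(\rho_i,k) = 0$ for scalars $c_{i,k}\in\mathbb{C}$ with $1\le i\le l$ and $0\le k\le m_i-1$. Reading off the $n$-th coordinate and setting $Q_i(x) = \sum_{k=0}^{m_i-1} c_{i,k} x^k$, this says exactly that $\sum_{i=1}^l \rho_i^{\,n} Q_i(n) = 0$ for all $n \in [D]$, where $\deg Q_i \le m_i - 1$; we must show every $Q_i$ is the zero polynomial.

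I would prove this in the following slightly more general form, by induction on $l$: if $\rho_1,\dots,\rho_l$ are distinct nonzero complex numbers, $Q_i$ are polynomials with $\deg Q_i < m_i$, and $\sum_{i=1}^l \rho_i^{\,n} Q_i(n) = 0$ for all $n \in [N']$ with $N' \ge \sum_{i=1}^l m_i$, then all $Q_i \equiv 0$. The base case $l=1$ is immediate: $\rho_1 \ne 0$ forces $Q_1$ to vanish at the $m_1$ distinct points $1,\dots,m_1$, while $\deg Q_1 < m_1$. For the inductive step, apply the shift-type operator $T\colon (a_n)\mapsto (a_{n+1} - \rho_l a_n)$ a total of $m_l$ times to the relation. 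On a sequence of the form $\rho_l^{\,n} p(n)$ the operator $T$ lowers the polynomial degree by one, so $m_l$ applications annihilate the contribution of $\rho_l^{\,n} Q_l(n)$; while on a term $\rho_i^{\,n} p(n)$ with $i \ne l$ one computes $T\big(\rho_i^{\,n} p(n)\big) = \rho_i^{\,n}\big(\rho_i\, p(n+1) - \rho_l\, p(n)\big)$, and the bracketed polynomial has the \emph{same} degree as $p$, since its leading coefficient is $(\rho_i - \rho_l)$ times that of $p$, which is nonzero. Hence after $m_l$ applications we obtain $\sum_{i=1}^{l-1} \rho_i^{\,n} \widetilde Q_i(n) = 0$ for all $n \in [N' - m_l]$, with $\deg \widetilde Q_i = \deg Q_i < m_i$ and $N' - m_l \ge \sum_{i=1}^{l-1} m_i$. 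The induction hypothesis gives $\widetilde Q_i \equiv 0$ for $i < l$, hence $Q_i \equiv 0$ for $i < l$ by the degree-preservation just noted; the original relation then collapses to $\rho_l^{\,n} Q_l(n) = 0$ on $[N']\supseteq[m_l]$, forcing $Q_l\equiv 0$ as well.

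I do not expect a serious obstacle here: the only point requiring care is the bookkeeping in the inductive step, namely verifying that each application of $T$ simultaneously (i) decreases the range of validity by exactly one, (ii) preserves the degree of every surviving polynomial $Q_i$ with $i\ne l$, and (iii) annihilates the $\rho_l$-term after exactly $m_l$ steps, so that the hypothesis $N' \ge \sum_i m_i$ is precisely what survives to feed the induction. An alternative route, if one prefers to avoid the induction, is to note that in the case $N = D$ the matrix whose columns are the $b_D(\rho_i,k)$ spans the same column space as a confluent (generalized) Vandermonde matrix — replace $n^k$ by the falling factorial $n(n-1)\cdots(n-k+1)$, which is obtained by applying $\rho^k(\partial_\rho)^k$ to $\rho^{\,n}$ — and such a determinant is well known to equal $\prod_{i<j}(\rho_i-\rho_j)^{m_i m_j}$ up to a nonzero constant, hence is nonzero when the $\rho_i$ are distinct; but the difference-operator argument is cleaner and entirely self-contained.
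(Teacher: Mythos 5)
Your proof is correct. Note, though, that the paper does not actually prove Lemma~\ref{lem:LinInd}: it is recorded as a fact with a \qed and treated as standard (essentially the finitary version of the linear independence of the sequences $(\rho^n n^k)$, i.e.\ the nonvanishing of a confluent Vandermonde determinant, which is also the route hinted at in the paper's later remark that these sequences are ``easy to check'' to be independent when $N \geq \sum_i m_i$). Your argument therefore supplies a proof the paper omits, and it is sound: the reduction to $N=\sum_i m_i$, the reformulation as $\sum_i \rho_i^n Q_i(n)=0$ on $[N']$ with $\deg Q_i<m_i$, and the induction on $l$ via the operator $T\colon (a_n)\mapsto(a_{n+1}-\rho_l a_n)$ all check out — $T$ sends $\rho_l^n p(n)$ to $\rho_l^n\cdot\rho_l\bigl(p(n+1)-p(n)\bigr)$, dropping the degree, and sends $\rho_i^n p(n)$ ($i\neq l$) to $\rho_i^n\bigl(\rho_i p(n+1)-\rho_l p(n)\bigr)$, whose leading coefficient is $(\rho_i-\rho_l)$ times that of $p$ and hence nonzero, so the bookkeeping of degrees and of the shrinking range $[N'-m_l]$ is exactly as you describe, and the distinctness assumption on the $\rho_i$ (implicit in the lemma, as you note) is used precisely where it must be. The only loose end is the alternative Vandermonde sketch at the end, where ``spans the same column space'' deserves a line of justification (the change from monomials $n^k$ to falling factorials is an invertible triangular change of basis on each block), but since that route is offered only as an aside, the main argument stands on its own.
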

From this, we deduce the following. 
\begin{lemma} \label{lem:effectiveLinearRecurrenceRelation}
For $N,d \in \mathbb{N}$ with $N \geq 3d$, let $v = (A_0,\ldots,A_{d-1}) \in \mathbb{C}^N$ be non-zero and let $\rho_1,\ldots,\rho_l \in \mathbb{C}$ be the distinct non-zero roots of $P(X) = A_0 + \cdots + A_{d-1}X^{d-1}$ with multiplicities $m_1,\ldots,m_l \in \mathbb{N}$. If all of the $d$-windows of $x = (x(n))_{n=1}^N \in \mathbb{C}^{N}$ are orthogonal to $v$ then there exists a unique choice of polynomials $Q_1(X),\ldots,Q_l(X) \in \mathbb{C}[X]$ with $\deg(Q_i) \leq m_i - 1$,  $i \in [l]$, such that
\begin{equation} \label{equ:effLemFormofx} x(n) = \sum_{i= 1}^l \rho_i^nQ_i(n),
\end{equation} for all $n \in [d,N-d]$. 
\end{lemma}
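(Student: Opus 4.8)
The plan is to translate the orthogonality hypothesis into a linear recurrence satisfied by $(x(n))$, to observe that any sequence obeying such a recurrence on a long enough block of indices must agree there with an honest bi-infinite solution of that recurrence --- which by the usual theory is a combination of the functions $n \mapsto \rho_i^n n^k$ --- and finally to read off uniqueness of the representing polynomials from Lemma~\ref{lem:LinInd}.

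First I would strip off the trivial part of the characteristic polynomial: write $P(X) = X^s\tilde{P}(X)$, where $s$ is the order of vanishing of $P$ at $0$ and $d' = \deg P \le d-1$, so that $\tilde P$ has degree $k_0 := d' - s = \sum_i m_i$, satisfies $\tilde P(0) \neq 0$, and has exactly the roots $\rho_1,\ldots,\rho_l$ with multiplicities $m_1,\ldots,m_l$. Unpacking the condition that the $r$-th $d$-window of $x$ is orthogonal to $v$ gives the relation $\sum_{j=0}^{d-1} A_j x(1+r+j) = 0$ for every $r \in \{0,\ldots,N-d\}$; since $A_j = 0$ for $j<s$ and $j>d'$, the substitution $u = 1+r+s$ turns this into the order-$k_0$ recurrence
\[ \sum_{t=0}^{k_0} A_{s+t}\, x(u+t) = 0 \qquad \text{for all } u \in \{1+s,\ldots,N-d+1+s\}, \]
whose characteristic polynomial is precisely $\tilde P$ (its extreme coefficients $A_{d'}$ and $A_s$ are nonzero by the definitions of $d'$ and $s$, so $\deg\tilde P = k_0$ and $\tilde P$ has no zero root). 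When $k_0 = 0$ the recurrence forces $x$ to vanish on the relevant range and the conclusion holds with every $Q_i = 0$, so I may assume $k_0 \ge 1$.

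The heart of the argument is the elementary fact that a finite sequence obeying an order-$k_0$ recurrence with invertible leading coefficient for every $u$ in an interval $\{a,\ldots,b\}$ must, on the whole block $\{a,\ldots,b+k_0\}$ of indices those equations involve, coincide with a bona fide solution of the same recurrence over all of $\mathbb{Z}$. To see this I would use that the $\mathbb{Z}$-solution space has dimension $k_0$ and is spanned by $\{(\rho_i^u u^k)_u : 1 \le i \le l,\ 0 \le k < m_i\}$, so that restriction to any $k_0$ consecutive indices is a linear bijection onto $\mathbb{C}^{k_0}$; choosing the unique global solution $w$ agreeing with $x$ on $\{a,\ldots,a+k_0-1\}$ and then propagating forward one index at a time via the recurrence (valid for both $w$ and $x$, top coefficient invertible) forces $w = x$ on $\{a,\ldots,b+k_0\}$. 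Taking $a = 1+s$ and $b = N-d+1+s$, so that $b+k_0 = N-d+1+d'$, this yields polynomials $Q_i$ with $\deg Q_i \le m_i-1$ and $x(n) = \sum_{i=1}^l \rho_i^n Q_i(n)$ for all $n \in \{1+s,\ldots,N-d+1+d'\}$; since $s \le d-1$ and $d' \ge 0$, this interval contains $[d,N-d]$, giving the existence claim. For uniqueness, two such representations of $x$ on $[d,N-d]$ differ by an identity $\sum_i \rho_i^n R_i(n) = 0$ on $[d,N-d]$ with $\deg R_i \le m_i-1$; reindexing $[d,N-d]$ --- whose length $N-2d+1$ is at least $d-1 \ge \sum_i m_i$ by the hypothesis $N \ge 3d$ --- and expanding the $R_i$ in monomials exhibits this as a linear dependence among the vectors of $S_{N-2d+1}(\rho_1,m_1)\cup\cdots\cup S_{N-2d+1}(\rho_l,m_l)$, which Lemma~\ref{lem:LinInd} forbids, so every $R_i \equiv 0$.

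I expect the only real obstacle to be bookkeeping: keeping the index shifts (the $+s$, the $+k_0$, and the passage between the window index $r$ and the recurrence index $u$) straight, and dispatching the edge cases where $P$ has a zero root, has degree strictly less than $d-1$, or reduces to $k_0 = 0$. The structural content is entirely standard linear-recurrence theory together with Lemma~\ref{lem:LinInd}.
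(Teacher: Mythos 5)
Your proof is correct, but it reaches the conclusion by a different mechanism than the paper. You translate the window condition into an order-$k_0$ recurrence with nonvanishing extreme coefficients, match $x$ on the relevant block with a bi-infinite solution by forward propagation from $k_0$ initial values, invoke the classical description of the global solution space as the span of the sequences $u \mapsto \rho_i^u u^k$, and then prove uniqueness separately by reindexing $[d,N-d]$ and applying Lemma~\ref{lem:LinInd}. The paper never leaves the finite window: after trimming $v$ to $w=(A_a,\ldots,A_b)$ it considers the space $V \subseteq \mathbb{C}^{[d,N-d]}$ of all sequences whose $t$-windows are orthogonal to $w$, computes $\dim V = t-1 = \sum_i m_i$ by exhibiting $V$ as the kernel of a banded triangular matrix with $A_a$ on the diagonal, checks that the $t-1$ vectors $(\rho_i^n n^j)_{n=d}^{N-d}$ lie in $V$ and are linearly independent by Lemma~\ref{lem:LinInd} (this is where $N \geq 3d$ enters), and concludes that they form a basis of $V$; since $x' \in V$, existence and uniqueness of the representation follow in one stroke. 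The trade-off: the paper's dimension count is entirely self-contained, which is the point, since the lemma is advertised as a finitary substitute for exactly the classical recurrence theory you quote, whereas your route is shorter to narrate but outsources existence to that classical structure theorem (whose proof, via bidirectional propagation plus an independence and dimension count, essentially reproduces the paper's argument in the bi-infinite setting). Your bookkeeping of the index shifts, of the zero roots of $P$, and of the degenerate case $k_0=0$ is accurate; the paper handles these implicitly (that case corresponds to $t=1$, $\dim V = 0$, forcing $x'=0$).
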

\begin{proof}
We express $v = (0,\ldots,0, A_a,\ldots,A_b,0,\ldots,0)$, where $a,b \in [0,d-1]$ and $A_a,A_b\not=0$. Also write $w = (A_a,\ldots,A_b)$, set $t = b - a+1$, $M = N-2d+1$ and let $V$ be the space of sequences $x \in \mathbb{C}^{[d,N-d]}$ for which $w$ is orthogonal to the space of $t$ windows of $x$. We see that $V$ can be expressed as the kernel of a triangular matrix with $A_a \not=0$ on the diagonal and $M - t + 1$ rows. Therefore $\dim(V) = t-1$. Now, it is easy to check that the set of $t-1 = \sum_i m_i$ vectors $(\rho_i^nn^j)_{n=d}^{N-d}$, $i \in [l]$, $j \in [0,m_i-1]$, lie in the space $V$. By the lower bound on $N$, we have $M = N-2d+1 \geq t-1 =  \sum_{i \in [l]} m_i$. We apply Lemma~\ref{lem:LinInd} to learn that these sequences are linearly independent and therefore, since there are $t-1$ of them, they form a basis for the space $V$. Since $(x(n))_{n= d}^{N-d} \in V$, $(x(n))_{n= d}^{N-d}$ has a unique expression in this basis. This completes the proof.  
\end{proof}

We record some properties of a linear map that shall appear later in the proof of Theorem~\ref{thm:CorWithPolyThm}. For $N \in \mathbb{N}$, $p \in [N-1]$ and $\rho \in \mathbb{C}$, we define the \emph{differencing operator} $\Delta_{\rho,p} : \mathbb{C}[X]\rightarrow \mathbb{C}[X]$, by 
\begin{equation} \label{equ:TwistedDiffop} (\Delta_{\rho, p} Q)(X) = \rho^pQ(X+p) - Q(X).
\end{equation} Also, for $d \in \mathbb{N}$, let $\mathbb{C}_d[X]$ be the space of polynomials with degree at most $d$. 

\begin{lemma} \label{fact:KernelOfDiffOp}
For $\rho \in \mathbb{C}$ and $p \in \mathbb{N}$, $\Delta_{\rho,p}$ defines a linear map $\Delta_{\rho,p} : \mathbb{C}_{d}[X] \mapsto \mathbb{C}_d[X]$
with the following properties. 
\begin{enumerate}
\item $\Delta_{\rho,p}$ has non-trivial kernel if and only if $\rho$ is a $p$th root of unity.
\item If $\rho$ is a $p$th root of unity, then the kernel is precisely the space of constant functions. 
\end{enumerate}
\end{lemma}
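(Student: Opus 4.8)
The plan is to work directly with the action of $\Delta_{\rho,p}$ on the monomial basis of $\mathbb{C}_d[X]$ and read off the kernel. First I would check that $\Delta_{\rho,p}$ maps $\mathbb{C}_d[X]$ into itself: if $Q$ has degree $e\le d$, then $\rho^p Q(X+p)$ also has degree $e$ with the same leading coefficient $c_e$ as $Q$ (up to the factor $\rho^p$), so $(\Delta_{\rho,p}Q)(X)=\rho^p Q(X+p)-Q(X)$ has degree at most $e\le d$, with leading coefficient $(\rho^p-1)c_e$. This already does most of the work: if $\rho^p\ne 1$, then for any nonzero $Q$ of degree $e$ the image has leading coefficient $(\rho^p-1)c_e\ne 0$, hence is nonzero; so the kernel is trivial. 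Thus a nontrivial kernel forces $\rho^p=1$, proving one direction of (i).

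For the converse direction of (i) and for (ii), suppose $\rho^p=1$. Then for a constant function $Q\equiv c$ we get $(\Delta_{\rho,p}Q)(X)=\rho^p c - c = c-c = 0$, so constants lie in the kernel and the kernel is nontrivial. It remains to show that when $\rho^p=1$ the kernel contains nothing else, i.e.\ it is exactly the constants. Suppose $Q$ is in the kernel with $\deg(Q)=e\ge 1$; I would derive a contradiction by looking at the coefficient of $X^{e-1}$ in $\Delta_{\rho,p}Q$. Writing $Q(X)=c_eX^e+c_{e-1}X^{e-1}+\cdots$, the leading terms cancel (since $\rho^p=1$), and expanding $Q(X+p)$ by the binomial theorem the coefficient of $X^{e-1}$ in $\rho^pQ(X+p)-Q(X)$ is $\rho^p\big(c_{e-1}+c_e\binom{e}{1}p\big)-c_{e-1}=c_e e p$, using $\rho^p=1$ once more. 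Since $p\ge 1$ and $e\ge 1$ and $c_e\ne 0$, this is nonzero, so $\Delta_{\rho,p}Q\ne 0$, a contradiction. Hence the kernel consists exactly of the constants, giving both the remaining implication of (i) and all of (ii).

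The only mild subtlety—and the one step I would be most careful about—is the $X^{e-1}$-coefficient computation, since one must confirm that the contribution that survives comes purely from differentiating the top monomial and is not accidentally cancelled; keeping track of the $\rho^p=1$ substitutions at each stage makes this transparent. Everything else is a routine linearity check, so no real obstacle is expected. (One could alternatively phrase the whole argument via the substitution $Q(X)\mapsto Q(X)$ expanded in the basis $\binom{X}{j}$, where $\Delta_{\rho,p}$ becomes upper-triangular with diagonal entries $\rho^p-1$ and known super-diagonal entries, but the monomial computation above is just as short.)
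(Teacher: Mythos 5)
Your proposal is correct and follows essentially the same route as the paper: both arguments act on the monomial basis, observe that the coefficient of $X^e$ in $\Delta_{\rho,p}Q$ is $(\rho^p-1)c_e$ (giving triviality of the kernel when $\rho^p\neq 1$), and when $\rho^p=1$ use the surviving $c_e e p$ term in degree $e-1$ (the paper's nonzero subdiagonal) to pin the kernel down to the constants. Your coefficient-wise phrasing is just the unpacked version of the paper's triangular-matrix argument, and the computation is accurate.
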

\begin{proof}
First note that $\Delta = \Delta_{\rho,p}$ is a linear map from $\mathbb{C}_d[X]$ to $\mathbb{C}_d[X]$. For the rest, we observe how this map acts on the natural basis $\{1,X,\ldots X^d\}$. For $k \in [0,d]$ we have
\begin{equation} \label{equ:DeltaMap}
 \Delta X^k = \rho^p(X+p)^k - X^k  = (\rho^p-1)X^k + k\rho^{p}X^{k-1} + \rho^p\sum_{i = 0 }^{k-2} \binom{k}{i}p^{n-i}X^i. 
\end{equation}Thus if we express the map $\Delta$ as a matrix $A$ with respect to the basis $1,X,\ldots,X^d$, the resulting matrix is in triangular form. If $\rho$ is \emph{not} a $p$th root of unity, then (\ref{equ:DeltaMap})  all the diagonal entries of $A$ are non-zero. Thus $A$ has full rank and $\Delta$ has trivial kernel. If $\rho$ \emph{is} a $p$th root of unity, (\ref{equ:DeltaMap}) tells us that the diagonal is all zero, while the subdiagonal is nowhere zero. Thus the kernel has dimension $1$. In this case, it is clear that the space of constant functions is in the kernel. This completes the proof. \end{proof}



\section{Some Results and Prepatory Lemmas} \label{sec:MainLemmas}
An important tool for our results is the ``solution to the Littlewood conjecture'', regarding the minimum of a $L_1$ norm of an exponential polynomial. After progress by several authors \cite{Salem, Cohen, Davenport,Pich}, Littlewood's ``$L_1$-norm conjecture'' was proved independently by Konyagin \cite{Konyagin}, and McGehee, Pigno, and Smith \cite{MPS}. We actually use a strong form of the result proved by McGehee, Pigno, and Smith \cite{MPS}, which we state here. 

\begin{theorem} \label{thm:LittlewoodConj} Let $N \in \mathbb{N}$, $A = \{ a_1 < a_2 < \cdots < a_N \} \subseteq \mathbb{Z}$ and
\[ f(\theta) = \sum_{r =1}^N C_r e^{ia_r\theta} 
\] be an exponential polynomial, where $C_1,\ldots,C_N \in \mathbb{C}$. Then there exists a function $h = h(f)$ such that $|h|_{\infty} \leq 1$ and 
\[ \int fh \geq \frac{1}{60} \sum_{r = 1}^N \frac{|C_r|}{r}.  \]
\end{theorem}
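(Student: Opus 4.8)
The statement is, via the duality explained below, equivalent to the classical bound of McGehee, Pigno, and Smith, so the plan is to reduce it to that form. Write $\int$ for the normalised integral $\tfrac1{2\pi}\int_0^{2\pi}$, as in Parseval's formula above, and define $h(\theta)=\overline{f(\theta)}/|f(\theta)|$ wherever $f(\theta)\neq 0$ and $h(\theta)=0$ otherwise. Then $|h|_\infty\le 1$ and $fh=|f|$ pointwise, so $\int fh=\int|f|$. Hence it suffices to prove the $L^1$-estimate
\[ \int |f| \;\geq\; \frac{1}{60}\sum_{r=1}^N\frac{|C_r|}{r}, \]
and the rest of the plan sketches why this holds (assuming, as we may, that every $C_r$ is non-zero).

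By $L^1$--$L^\infty$ duality, $\int|f|=\sup\{\,|\int f g|:|g|_\infty\le 1\,\}$, and by Parseval $\int f(-\,\cdot\,)g=\sum_{r=1}^N C_r\,\widehat g(a_r)$; since $\int|f(-\,\cdot\,)|=\int|f|$, it is enough to produce a single $g$ with $|g|_\infty\le 1$ for which $\sum_r C_r\widehat g(a_r)$ is large. This follows from the \emph{universal multiplier lemma}: there is an absolute constant $c>0$ such that for every increasing integer sequence $a_1<\cdots<a_N$ and every choice of unimodular $\zeta_1,\dots,\zeta_N$ there exists $g$ with $|g|_\infty\le 1$ and $\operatorname{Re}\big(\overline{\zeta_r}\,\widehat g(a_r)\big)\ge c/r$ for all $r$. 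Applying this with $\zeta_r=\overline{C_r}/|C_r|$ gives $\operatorname{Re}\big(C_r\widehat g(a_r)\big)=|C_r|\operatorname{Re}\big(\overline{\zeta_r}\widehat g(a_r)\big)\ge c|C_r|/r$; summing and using the previous paragraph yields $\int|f|\ge\big|\sum_r C_r\widehat g(a_r)\big|\ge c\sum_r|C_r|/r$, which is the claim with $c=1/60$. Only the exponent in the harmonic weights has any slack, since $\sum_r|\widehat g(a_r)|^2\le\int|g|^2\le|g|_\infty^2\le 1$ forces roughly $1/\sqrt r$ decay on the best attainable coefficients.

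The one genuinely difficult step is to construct the multiplier $g$ of the lemma, and this is the main obstacle. The plan is to build $g$ recursively, bringing in the frequencies $a_1,a_2,\dots$ one at a time: when $a_r$ is added, one modifies the current function by a correction whose spectrum lies entirely on one side of $a_r$ --- an ``analytic'' bump, or a unimodular Blaschke-type factor --- so that the bound $|g|_\infty\le 1$ is preserved and the coefficients already placed at $a_1,\dots,a_{r-1}$ are perturbed only in a controlled, essentially multiplicative fashion. The delicate calibration is to choose the sizes of these successive corrections so that the \emph{accumulated} erosion of any one earlier coefficient across all later steps is only polynomial in the number of steps --- this is what produces the weight $1/r$, whereas a naive incorporation would cost an exponential factor --- while never violating the sup-norm constraint; the explicit constant emerges from this bookkeeping. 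This is precisely the argument of McGehee, Pigno, and Smith \cite{MPS}. As we require nothing beyond their statement, we do not reproduce the construction here, and content ourselves with isolating the point that the entire difficulty is concentrated in the sup-norm-versus-decay trade-off at the heart of that recursion.
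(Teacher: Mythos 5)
Your proposal is correct and matches the paper's treatment: the paper does not prove this statement at all, but quotes it as the (strong form of the) McGehee--Pigno--Smith theorem and cites \cite{MPS}, exactly the result you invoke for the hard multiplier construction. Your additional observation that the stated form follows from the $L^1$ bound by taking $h=\overline{f}/|f|$ is a harmless and correct reduction, so nothing essential differs from the paper.
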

Of course, this result immediately implies Littlewood's conjecture, that if $A \subseteq \mathbb{N} \cup \{0\}$ is a finite set then the polynomial 
$f_A(\theta) = \sum_{a \in A} \cos(a\theta)$ has $L_1$ norm at least $c\log |A|$. We remark that we do not require the full strength of the result above in the sense that a weaker quantitative form of Theorem~\ref{thm:LittlewoodConj} would still yield the results we desire, albeit with weaker bounds. 
\paragraph{}
The following sequence of lemmas play a key role in our proof of Theorem~\ref{thm:CorWithPolyThm} and our main step towards Theorem~\ref{thm:main2}. The following result says that if $Ax = b$, where $A$ is a matrix with full rank and integer entries, then if $b$ is ``small'', $x$ must also be ``small''. We ultimately apply this result when we encounter a segment of our coefficient sequence whose corresponding space of $d$-windows has full rank. 

\begin{lemma} \label{lem:InversionBound} 
For $n \in \mathbb{N}$, let $A$ be an $n\times n$ invertible matrix with entries from $R \subseteq \mathbb{Z}$. If $Ax = b$ for some $x,b \in \mathbb{C}^n$, then $|x|_{\infty} \leq M(R)^{n-1}n^{n/2}|b|_{\infty}$. 
\end{lemma}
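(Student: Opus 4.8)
The plan is to use Cramer's rule together with the Hadamard inequality. Since $A$ is invertible, for each coordinate $i$ we have $x_i = \det(A_i)/\det(A)$, where $A_i$ is the matrix obtained from $A$ by replacing its $i$th column with $b$. As $A$ has integer entries and is invertible, $|\det(A)| \geq 1$, so $|x_i| \leq |\det(A_i)|$. It therefore suffices to bound $|\det(A_i)|$ from above.

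For this I would apply Hadamard's inequality in the form $|\det(B)| \leq \prod_{j} \|B^{(j)}\|_2$, where $B^{(j)}$ denotes the $j$th column of $B$. In $A_i$, every column except the $i$th is a column of $A$, hence has entries in $R$, so its Euclidean norm is at most $\sqrt{n}\, M(R)$. The remaining column is $b$, whose Euclidean norm is at most $\sqrt{n}\,|b|_\infty$. Multiplying these $n$ bounds together gives $|\det(A_i)| \leq (\sqrt{n}\,M(R))^{n-1} \cdot \sqrt{n}\,|b|_\infty = M(R)^{n-1} n^{n/2} |b|_\infty$, which is exactly the claimed bound on $|x_i|$, and taking the maximum over $i$ completes the proof.

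There is really no serious obstacle here; the only mild subtlety is the bookkeeping that the column being replaced contributes a factor $\sqrt{n}\,|b|_\infty$ rather than $\sqrt{n}\,M(R)$, which is what produces the exponent $n-1$ on $M(R)$ rather than $n$. One should also note the degenerate edge case $M(R) = 0$, but then $A$ cannot be invertible, so the hypothesis is vacuous and nothing needs to be checked. If one wanted to avoid Cramer's rule entirely, an alternative is to write $A^{-1} = \operatorname{adj}(A)/\det(A)$ and bound each entry of the adjugate by Hadamard applied to the $(n-1)\times(n-1)$ minors, but the Cramer's-rule route is cleaner since it feeds $b$ directly into the determinant.
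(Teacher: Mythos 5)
Your proposal is correct and follows essentially the same route as the paper: Cramer's rule, the lower bound $|\det(A)|\geq 1$ from integrality and invertibility, and Hadamard's inequality bounding $|\det(A_i)|$ by the product of column norms, with the replaced column contributing $\sqrt{n}\,|b|_\infty$ (the paper phrases this as $(M(R)n^{1/2})^{n-1}|b|_2 \leq M(R)^{n-1}n^{n/2}|b|_\infty$). No gaps to report.
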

\begin{proof}
If we write $x = (x_1,\ldots,x_n)$,  Cramer's theorem tells us that $x_i = \frac{\det(A_i)}{\det(A)}$, for $i \in [n]$, where $A_i$ is the matrix $A$ with the $i$th column replaced with the vector $b$. We have that
\[ |\det(A_i)| \leq (M(R)n^{1/2})^{n-1}|b|_{2} \leq M(R)^{n-1}n^{n/2}|b|_{\infty},
\] as $|\det(A_i)|$ is bound by the product of the $\ell_2$-norms of the column vectors of $A_i$, $i \in [n]$. We have also used $|b|_2 \leq n^{1/2}|b|_{\infty}$.
\paragraph{} On the other hand, since $A$ is an invertible matrix we have that $\det(A) \not=0$. Therefore $|\det(A)| \geq 1$, as $A$ is a matrix with integer entries and the determinant is a polynomial in these entries. Therefore $|x_i| \leq M(R)^{n-1}n^{n/2}|b|_{\infty}$ for all $i \in [n]$, as desired. \end{proof}

The next lemma, in contrast with the previous lemma, is ultimately applied when certain spaces of $d$-windows \emph{fail} to have full rank. In this case we can derive quite a bit of information about the structure of the original sequence. We also use this lemma to derive some properties of a class of exponential polynomials. For $k \in \mathbb{N}$, we define the exponential polynomial $S_k$ by
\begin{equation} \label{def:DefOfSk} S_k(\theta) = \prod_{r = 1}^k \left( 1 - e^{ir\theta} \right) .
\end{equation} Note that $\deg(S_k) = k(k+1)/2 \leq k^2$. In what follows, we show that if the space of $d$-windows of a rational sequence of coefficients has rank $<d$, then this sequence can be expressed as the sum of periodic sequences with bounded period. In our arguments, it is crucial that this bound on the period depends only on $d$ and not on $|R|$. 

\begin{lemma} \label{lem:RankOfWindows}
For $N,d,t \in \mathbb{N}$, let $R \subseteq \mathbb{Q}$ be a finite set such that $N > |R|^d + 3d$, and let $x = (x(r))_{r=1}^N \in R^N$ be a sequence. Denote by $V$ the space of $d$-windows of $x$ and write $x' = (x(n))_{n=d}^{N-d}$.
\begin{enumerate}
\item \label{item:RankOfWindows1} If $\dim(V) = t < d$ then $x'$ can be written as $x' = x_1 + \cdots + x_t$, where each
$x_i \in \mathbb{C}^{[d,N-d]}$, $i \in [t]$, is periodic with period at most $16t\log\log(t+3)$. 

\item \label{item:RankOfWindows2} For $k \in \mathbb{N}$, let $S_k(\theta)$ be as above, let $n = \deg(S^2_k) = k(k+1)$ and write $S^2_k(\theta) = B_0 + B_1e^{i\theta} + \cdots + B_{n}e^{n i\theta}$. If $d \geq k(k+1) + 1$ and $v = (B_0,\ldots,B_n,0,\ldots,0) \in  V^{\perp}$ then we may write $x' = x_1 + \cdots + x_n$ where each $x_i \in \mathbb{C}^{[d,N-d]}$, $i \in [n]$, is periodic with period at most $k$. 
\end{enumerate}
\end{lemma}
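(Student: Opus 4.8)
The plan is to prove both parts by a common argument built on Lemma~\ref{lem:effectiveLinearRecurrenceRelation} (to put $x'$ in the form $\sum_i \rho_i^n Q_i(n)$), Lemma~\ref{lem:LinInd} (to exploit linear independence), and the finiteness of $R$ (to kill the polynomial factors). For part (i): the hypothesis $\dim(V) = t < d$ means that projecting the $d$-windows of $x$ onto their first $t+1$ coordinates produces $(t+1)$-windows spanning a subspace of $\mathbb{C}^{t+1}$ of dimension at most $t$, so there is a nonzero vector $w = (w_0,\ldots,w_t)$ annihilating all of them; since $x$ is rational-valued this subspace is defined over $\mathbb{Q}$, so we may take $w \in \mathbb{Q}^{t+1}$. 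Padding $w$ with zeros to $v \in \mathbb{C}^d$, every $d$-window of $x$ is orthogonal to $v$, and $P(X) := w_0 + w_1 X + \cdots + w_t X^t \in \mathbb{Q}[X]$ has degree at most $t$. Lemma~\ref{lem:effectiveLinearRecurrenceRelation} then yields the distinct nonzero roots $\rho_1,\ldots,\rho_l$ of $P$ (with multiplicities $m_i$, $\sum_i m_i \le \deg P \le t$) and polynomials $Q_i$ with $\deg Q_i \le m_i - 1$ such that $x(n) = \sum_{i=1}^l \rho_i^n Q_i(n)$ for all $n \in [d, N-d]$, so $\sum_i(\deg Q_i + 1) \le t < d$. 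For part (ii) this step is skipped since $v$ is given: here $P(X) = B_0 + \cdots + B_n X^n = \prod_{r=1}^k(1 - X^r)^2$, whose nonzero roots are precisely the primitive $q$th roots of unity with $q \le k$, the one of order $q$ having multiplicity $2\lfloor k/q\rfloor$, so $\sum_i m_i = \deg P = k(k+1) < d$; Lemma~\ref{lem:effectiveLinearRecurrenceRelation} again gives $x(n) = \sum_i \rho_i^n Q_i(n)$ on $[d,N-d]$ with $\sum_i(\deg Q_i + 1) \le k(k+1) < d$.

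The core step is to use the finiteness of $R$ to force each $Q_i$ to be constant. Since $N > |R|^d + 3d$, among the $|R|^d + 1$ $d$-windows of $x$ starting in $[d, d + |R|^d]$ two coincide, say at positions $s_1 < s_2$; the bound on $N$ guarantees that both windows $W := [s_1, s_1 + d - 1]$ and $W + q$, where $q := s_2 - s_1 \le |R|^d$, lie inside $[d, N-d]$. Setting $\tilde Q_i(X) := Q_i(X) - \rho_i^q Q_i(X+q)$ (of degree $\le \deg Q_i$), the equality $x(n) = x(n+q)$ on $W$ gives $\sum_i \rho_i^n \tilde Q_i(n) = 0$ for all $n \in W$; since $|W| = d$ exceeds $\sum_i(\deg \tilde Q_i + 1) \le \sum_i m_i = \deg P$, Lemma~\ref{lem:LinInd} (applied after translating $W$ to an initial segment, which does not affect linear independence) forces $\tilde Q_i \equiv 0$ for every $i$, i.e.\ $Q_i(X) = \rho_i^q Q_i(X + q)$. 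Comparing leading coefficients (for those $i$ with $Q_i \not\equiv 0$) gives $\rho_i^q = 1$, so $\rho_i$ is a root of unity, and then $Q_i(X) = Q_i(X+q)$ forces $Q_i$ to be a constant $C_i$. Hence $x(n) = \sum_i C_i \rho_i^n$ for all $n \in [d, N-d]$.

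It remains to read off the periodic decomposition. In part (i), each $\rho_i$ (with $C_i \ne 0$) is a root of the degree-$\le t$ rational polynomial $P$, so $[\mathbb{Q}(\rho_i):\mathbb{Q}] \le t$; being a root of unity of some order $o_i$ this means $\varphi(o_i) \le t$, and standard lower bounds for Euler's totient function (one first uses $\varphi(q) \ge \sqrt q$ for large $q$ to get $o_i \le t^2$, then $\varphi(q) \gg q/\log\log q$) yield $o_i \le 16 t \log\log(t+3)$. Taking $x_i(n) = C_i\rho_i^n$ (periodic with period $o_i$) and padding with zero sequences to exactly $t$ summands proves part (i). In part (ii), each $\rho_i$ is a primitive $q_i$th root of unity with $q_i \le k$; grouping the terms $C_i\rho_i^n$ by the value of $q_i$ expresses $x'$ as a sum of at most $k$ sequences $x_q(n) = \sum_{\zeta} C_\zeta \zeta^n$ (inner sum over primitive $q$th roots of unity), each periodic with period dividing $q \le k$; padding with zeros to $n = k(k+1)$ summands proves part (ii).

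The only genuinely non-formal point is the middle paragraph: leveraging that $x$ takes finitely many values to eliminate the polynomial factors $Q_i$. The remaining delicacies are bookkeeping — choosing the pigeonhole windows so that both coinciding copies still lie in $[d,N-d]$ (which is exactly what $N > |R|^d + 3d$ buys) and invoking the totient bound with enough slack to absorb the constant $16$ and the shift $+3$ — and I do not expect either to cause real difficulty.
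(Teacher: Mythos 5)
Your proposal is correct and takes essentially the same route as the paper, which packages your middle paragraph as Lemma~\ref{lem:ExpressAsRoots}: a rational annihilating vector supported on few coordinates, Lemma~\ref{lem:effectiveLinearRecurrenceRelation} to write $x(n)=\sum_i\rho_i^nQ_i(n)$, a pigeonhole over the finite alphabet $R$ forcing $\rho_i^qQ_i(X+q)=Q_i(X)$ (hence $\rho_i$ a root of unity and $Q_i$ constant), and the cyclotomic-degree/totient bound for the period. The only cosmetic differences are that you pigeonhole on $d$-windows and compare leading coefficients directly, where the paper pigeonholes on shorter blocks and invokes the kernel computation of Lemma~\ref{fact:KernelOfDiffOp}.
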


We prove Lemma~\ref{lem:RankOfWindows}, by first proving that if $x = (x(n))_{n=1}^N$ is a sequence of the form (\ref{equ:fromofx(n)}) that is restricted to a finite set, then $x'$ can be expressed a linear combination of terms of the form $\rho^n$, where $\rho$ is a root of unity, \emph{and} the root of a certain bounded-degree polynomial.

\begin{lemma} \label{lem:ExpressAsRoots} Let $R \subseteq \mathbb{C}$ be a finite set, let $N,t \in \mathbb{N}$ satisfy $N \geq |R|^{(t+1)} + 3(t+1)$, and let $x = (x(r))_{r=1}^N \in R^N$ be a sequence. If $V$ is the space of $(t+1)$-windows of $x$ and if $v = (C_0,\ldots,C_t) \in V^{\perp}$ is non-zero then there exists $\alpha_1,\ldots,\alpha_l \in \mathbb{C}$ so that $x(r) = \alpha_1\rho_1^r + \cdots + \alpha_l\rho_l^r $ for all $r \in [t+1,N-(t+1)]$, where $l \in [t]$ and $\rho_1, \ldots, \rho_l$ are roots of unity and zeros of the polynomial $P(X) = C_0 + C_1X + \cdots + C_tX^t$.
\end{lemma}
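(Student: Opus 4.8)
The plan is to combine the finitary linear-recurrence result, Lemma~\ref{lem:effectiveLinearRecurrenceRelation}, with a pigeonhole argument that forces the relevant bases to be roots of unity. Since $v = (C_0,\ldots,C_t) \in V^\perp$ means every $(t+1)$-window of $x$ is orthogonal to $v$, Lemma~\ref{lem:effectiveLinearRecurrenceRelation} (applied with $d = t+1$, noting $N \geq |R|^{(t+1)} + 3(t+1) \geq 3(t+1)$) gives us polynomials $Q_1,\ldots,Q_l$ with $\deg Q_i \leq m_i - 1$, where $\rho_1,\ldots,\rho_l$ are the distinct non-zero roots of $P(X) = C_0 + \cdots + C_t X^t$ with multiplicities $m_1,\ldots,m_l$, such that $x(n) = \sum_{i=1}^l \rho_i^n Q_i(n)$ for all $n \in [t+1, N - (t+1)]$. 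Here $l \leq t$ since $\sum m_i \leq \deg P \leq t$.

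The two things that remain to be shown are: (a) each $\rho_i$ is in fact a root of unity, and (b) each $Q_i$ is in fact constant, so that the expression collapses to $x(r) = \sum \alpha_i \rho_i^r$ with $\alpha_i = Q_i$. For (a) and (b) together, the key observation is that $x$ takes values in the finite set $R$, while on the window $[t+1, N-(t+1)]$ — which has length at least $|R|^{(t+1)} + (t+1) > |R|^{(t+1)}$, comfortably more than $|R|^{t}$ — the sequence $(x(n))$ is determined on any block of $t$ consecutive terms by the recurrence, hence is eventually periodic in the strong sense that the block of $t$ consecutive values $(x(n), x(n+1), \ldots, x(n+t-1))$ can only take $|R|^t$ distinct values, so two such blocks must coincide within the window; once a block of length $t$ repeats, the recurrence (equivalently the fact that $x$ satisfies the linear recurrence with characteristic polynomial $P$ on this range, reading off $A_{d-1} x(n+d-1) = -\sum_{j<d-1} A_j x(n+j)$ when the leading coefficient is nonzero, or handling the shifted version as in the proof of Lemma~\ref{lem:effectiveLinearRecurrenceRelation}) forces $x$ to be genuinely periodic on the window with some period $q \leq |R|^t$. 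A genuinely periodic sequence of the form $\sum \rho_i^n Q_i(n)$ (with distinct nonzero $\rho_i$ and the $\rho_i^n Q_i(n)$ linearly independent as sequences on a long enough interval, by Lemma~\ref{lem:LinInd}) must have each $\rho_i$ equal to a $q$th root of unity and each $Q_i$ constant: applying the period-$q$ shift and matching coefficients in the linearly independent system $\{\rho_i^n n^j\}$ gives $\rho_i^q Q_i(n+q) = Q_i(n)$, forcing $|\rho_i| = 1$ and then, by comparing leading terms of $Q_i$, $\rho_i^q = 1$ and $Q_i$ constant.

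The main obstacle I expect is step (a)/(b) — namely making the passage from "$x$ restricted to $R$" to "$x$ genuinely periodic on the window" fully rigorous, in particular dealing cleanly with the case where $P$ has a zero constant coefficient (so the naive "solve for the last term" recurrence doesn't directly apply) and with the bookkeeping of exactly which sub-window the periodicity holds on. The shifted-recurrence device already used in the proof of Lemma~\ref{lem:effectiveLinearRecurrenceRelation} — writing $v = (0,\ldots,0,A_a,\ldots,A_b,0,\ldots,0)$ and working with the window of length $t = b-a+1$ — handles this: one gets that the length-$(t-1)$ blocks of $x'$ determine $x'$, and since there are at most $|R|^{t-1} \leq |R|^t < $ (window length) such blocks, pigeonhole produces a repeated block, hence exact periodicity with period $q \leq |R|^t$ on a slightly shrunk window still containing $[t+1, N-(t+1)]$ by the hypothesis on $N$. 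The remaining deduction that periodicity forces roots of unity and constant $Q_i$ is then the linear-algebra computation sketched above, and one reads off $\alpha_i = Q_i$, $l \leq t$, completing the proof.
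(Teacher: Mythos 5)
Your proposal is correct and follows essentially the same route as the paper: apply Lemma~\ref{lem:effectiveLinearRecurrenceRelation} with $d=t+1$, use the hypothesis $x \in R^N$ and the length of the window to find, by pigeonhole, a shift $p$ and an interval on which $x(r)=x(r+p)$, and then eliminate the non-root-of-unity $\rho_i$ and the non-constant parts of the $Q_i$ via the differencing operator $\Delta_{\rho,p}$ together with Lemma~\ref{lem:LinInd} and Lemma~\ref{fact:KernelOfDiffOp}. The only (harmless) deviation is that the paper pigeonholes on $(t+1)$-blocks, which immediately gives a coincidence interval $I'$ of length $t+1$ on which to run the differencing argument, so your intermediate step of propagating the repeated block through the recurrence to genuine periodicity of $x$ on the whole window is unnecessary.
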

\begin{proof}
Let $\rho_1,\ldots, \rho_l$, $l \in [t]$, be the distinct, non-zero complex roots of the polynomial $P(X) = C_0 + C_1X + \cdots + C_tX^t$ with multiplicities $m_1,\ldots,m_l$, respectively. Put $I = [t+1,N-(t+1)]$ and apply Lemma~\ref{lem:effectiveLinearRecurrenceRelation} to write
\begin{equation} \label{equ:Expandx(r)} x(r) = \sum_{i = 1}^l \rho_i^rQ_i(r)  ,
\end{equation} for $r \in I$, where $Q_1(X),\ldots,Q_l(X)$ are polynomials over $\mathbb{C}$ with $\deg(Q_i)\leq m_i -1$ for $i \in [l]$. As $|I| > N - 2(t+1) \geq |R|^{(t+1)} + (t+1)$, there must exist an integer $p$ and an interval $I' \subseteq I$ so that $I'+p \subseteq I$, $|I'| \geq t+1$ and $x(r) = x(r+p)$, for all $r \in I'$. Thus, for all $r \in I'$, we have 
\begin{align*}
 0 &= x(r+p) - x(r) \\
   &= \sum_{i=1}^l \rho_i^r(\rho_i^p Q_i(r+p) - Q_i(r))  \\
   &=  \sum_{i=1}^l \rho_i^r(\Delta_{\rho_i,p}Q_i)(r) ,
 \end{align*} where $\Delta_{\rho,p}$ is the differencing operator defined at (\ref{equ:TwistedDiffop}). Now since $|I'| \geq t+1$ the vectors $(\rho_i^rr^j)_{r \in I}$, $j \in [0,m_i-1]$ for $i \in [l]$, are linearly independent, by Lemma~\ref{lem:LinInd}. It then follows that the polynomials $\Delta_{\rho_1,p}Q_1,\ldots,\ \Delta_{\rho_l,p}Q_l \in \mathbb{C}[X]$ are all identically zero. We would like to use this to say something about the polynomials $Q_1,\ldots,Q_l$. 
\paragraph{}
Let $S  = \{ i \in [l]: \rho_i^p = 1 \}$. If $i \not\in S$, we apply Lemma~\ref{fact:KernelOfDiffOp} to learn that $\Delta_{\rho_1,p}$ has trivial kernel and therefore $Q_i(X) \equiv 0$. If $i \in S$, then $\rho_i$ \emph{is} a $p$th root of unity, and we apply Lemma~\ref{fact:KernelOfDiffOp} to learn that $Q_i(X) = \alpha_i$, for some $\alpha_i \in \mathbb{C}$. So, from line (\ref{equ:Expandx(r)}), we have
\[ x(r) = \sum_{i \in S} \alpha_i\rho_i^r , 
\]for all $r \in I = [t+1,N-(t+1)]$, as desired. \end{proof}

We now use Lemma~\ref{lem:ExpressAsRoots} to give the proof of Lemma~\ref{lem:RankOfWindows}. In what follows, we let $\phi$ denote the Euler totient function. That is, if $n \in \mathbb{N}$, $\phi(n)$ is the number of positive integers $x \leq n$ that are co-prime to $n$. It is well known that $\phi(n)$ grows approximately as $cn/(\log\log n)$. We use the explicit lower bound $\phi(n) \geq n/(8\log\log n)$, which holds for all integers $n > 3$ \cite{EulerBound}. 
\paragraph{}
\emph{ Proof of Lemma~\ref{lem:RankOfWindows} :} Let $V$ denote the space of $d$-windows of $x$. To prove Conclusion~(\ref{item:RankOfWindows1}), note that we have $\dim(V^{\perp}) = d - t $. Since $V$ is spanned by vectors with rational entries, it follows that we may find a basis $\mathcal{B}$ for $V^{\perp}$ using rational vectors. We then may find a non-zero vector $v \in V^{\perp} \cap \mathbb{Q}^d$ which is supported on the first $t+1$ coordinates: let $B$ be a $(d-t) \times d$ matrix whose rows are the vectors of $\mathcal{B}$; then use Gaussian elimination on $B$ to produce a matrix which is lower-triangular form. The rows of this resulting matrix will still be in $V^{\perp} \cap \mathbb{Q}^d$ and will include a vector $v$ which is supported on the coordinates with indexes in $[t+1]$.

Now write $v = (C_0,\ldots,C_t,0, \ldots ,0 ) \in \mathbb{Q}^d$ and let $\rho_1,\ldots, \rho_l$, $l \in [t]$, be the distinct, non-zero complex roots of the polynomial $P(X) = C_0 + C_1X + \cdots + C_tX^t$. Since $N \geq |R|^{t+1} + 3(t+1)$, we may apply Lemma~\ref{lem:ExpressAsRoots} to write
\[ x(r) = \sum_{i=1}^l \alpha_i \rho^r_i, 
\] for $r \in [t+1,N-(t+1)] \supseteq [d,N-d]$, where $\rho_1,\ldots,\rho_l$ are roots of unity and roots of the polynomial $P(X)$. We take $(x_i(r))_{r = d}^{N-d}  = (\alpha_i\rho_i^r)_{r=d}^{N-d} $ for each $i \in [l]$.
\paragraph{}
We now show that we have the desired bound on the periods of the sequences $x_1,\ldots,x_l$. For this, let $i \in S$. We know that $\rho_i$ is a root of unity and so $\rho_i$ is a $q_i$th \emph{primitive} root of unity, for some $q_i \in \mathbb{N}$. It is well known that the $q_i$th cyclotomic polynomial $\Phi_{q_i}(X)$ is the minimal polynomial of $\rho_i$ (over the field $\mathbb{Q}$) and has degree $\phi(q_i)$. As $\rho_i$ is also a root of $P(X) \in \mathbb{Q}[X]$, it follows that $\deg(\Phi_{q_i}(X)) \leq \deg(P(X))$ and hence $\phi(q_i) \leq \deg(P(X)) \leq t$. If $q_i \leq 16$ we trivially have $q_i \leq 16 \leq  16t\log(\log(t+3))$, as $t \geq 1$. Otherwise, if $q_i > 16$ we have 
\[ \frac{q_i}{8\log\log q_i} \leq \phi(q_i) \leq  t ,\] which implies $q_i \leq 16t\log\log(t+3)$. Therefore the sequence $x_i$ is periodic with period at most $16t\log\log(t+3)$.
\paragraph{}
We now turn to our proof of Conclusion (\ref{item:RankOfWindows2}). Since $(B_0,\ldots,B_n,0,\ldots,0) \in V^{\perp}$ and $N > |R|^{n+1} + 3(n+1)$, we may apply Lemma~\ref{lem:ExpressAsRoots} to write $x' = x_1 + \cdots + x_n$, where $(x_i(r))_{r=d}^{N-d} = (\alpha_i\rho^r_i)_{r=d}^{N-d}$, $\alpha_i \in \mathbb{C}$ and $\rho_i$ is a root of the polynomial 
\[ B_0 + B_1X + \cdots + B_nX^n = \left(\prod_{r = 1}^k \left( 1 - X^{r} \right)\right)^2 ,
\] for all $i\in [n]$. But this means, for each $i \in [l]$, that $\rho_i $ is a $q_i$th root of unity, for some $q_i \in [k]$. Thus each of $x_1,\ldots,x_n$ are periodic with period at most $k$. \qed
\paragraph{}
The following lemma makes explicit two important properties of the polynomials $S_k$. Firstly, that when an exponential polynomial $f$ is multiplied by $S_k$, $S_k$ will ``kill'' $\hat{f}(r)$ in the intervals where $\hat{f}(r)$ is periodic, with period at most $k$. Secondly, if $f$ has coefficients from a finite set, two multiplications by $S_k$ is essentially the same as one, with respect to this ``killing'' property. 

\begin{lemma} \label{lem:PropertiesOfSk}
For $k \in \mathbb{N}$ and $M,N \in \mathbb{Z}$, let $R \subseteq \mathbb{Q}$ be a finite set and $f$ be an exponential polynomial. 
\begin{enumerate}
\item \label{item:SkKills} If $(\hat{f}(r))_{r \in [N,M]}$ is periodic with period at most $k$ then $\widehat{S_kf}(r) = 0 $, for $r \in [N + \deg(S_k) ,M]$.
\item \label{item:indempotent} If $M - N > |R|^{2\deg(S_k)} + 6\deg(S_k)$ and $\hat{f}(r) \in R $, for all $r \in [N,M]$ then, $\widehat{S^2_kf}(r) = 0 $, for $r \in [N,M]$, implies that $\widehat{S_kf}(r) = 0 $, for $r \in [N + 3\deg(S_k),M - 2\deg(S_k)]$. 
\end{enumerate}
\end{lemma}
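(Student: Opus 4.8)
The plan is to reduce both parts to one annihilation fact about $S_k$. Writing $S_k$ as a polynomial in $z = e^{i\theta}$ we have $S_k(\theta) = \sum_{s=0}^{\deg S_k}\widehat{S_k}(s)e^{is\theta}$ with $\widehat{S_k}(0) = 1$, and for any $q$th root of unity $\rho$ with $q \le k$,
\[ \sum_{s=0}^{\deg S_k}\widehat{S_k}(s)\,\rho^{-s} = \prod_{r=1}^{k}\bigl(1-\rho^{-r}\bigr) = 0, \]
because the factor with $r = q$ vanishes. Hence, if on an interval of integers $I$ one has $\hat f(r) = \sum_i \alpha_i \rho_i^{\,r}$ with each $\rho_i$ a root of unity of order at most $k$, then expanding $\widehat{S_kf}(r) = \sum_{s=0}^{\deg S_k}\widehat{S_k}(s)\hat f(r-s)$ gives $\widehat{S_kf}(r) = \sum_i \alpha_i \rho_i^{\,r}\prod_{r'=1}^{k}(1-\rho_i^{-r'}) = 0$ for every $r$ with $[r-\deg S_k, r] \subseteq I$. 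So in each part the task is to put $\hat f$, on the relevant subinterval, into this ``bounded combination of small‑order root‑of‑unity modes'' form.

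For part~(\ref{item:SkKills}) this is immediate: a sequence periodic with period $p \le k$ on $[N,M]$ lies in the span of $\{(\omega^{jr})_r\}_{j=0}^{p-1}$ with $\omega = e^{2\pi i/p}$ (a Vandermonde argument on one period), and each $\omega^j$ is a root of unity of order dividing $p \le k$; so the displayed identity applies with $I = [N,M]$, yielding $\widehat{S_kf}(r) = 0$ whenever $[r-\deg S_k, r] \subseteq [N,M]$, that is, for $r \in [N+\deg S_k, M]$. Alternatively, and more elementarily, one can factor $S_k = (1-e^{ip\theta})\prod_{1\le r\le k,\,r\ne p}(1-e^{ir\theta})$; periodicity makes $\widehat{(1-e^{ip\theta})f}(r) = \hat f(r)-\hat f(r-p)$ vanish on $[N+p,M]$, and convolving with the remaining factor, of degree $\deg S_k - p$, propagates the vanishing to $[N+\deg S_k, M]$.

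For part~(\ref{item:indempotent}) I first record that $S_k^2$ has palindromic integer coefficients: as a polynomial in $z$ one checks $z^{\deg S_k}S_k(1/z) = (-1)^k S_k(z)$, so squaring removes the sign and $\widehat{S_k^2}(s) = \widehat{S_k^2}(2\deg S_k - s)$, with $\widehat{S_k^2}(0) = 1$. The hypothesis $\widehat{S_k^2 f}(r) = 0$ for $r \in [N,M]$ says that, for each $r \in [N+2\deg S_k, M]$ (the range in which every index occurring in the convolution stays inside $[N,M]$), the window $(\hat f(r-2\deg S_k),\dots,\hat f(r))$ of the $R$‑valued sequence $(\hat f(j))_{j=N}^{M}$ is orthogonal to $v = (\widehat{S_k^2}(0),\dots,\widehat{S_k^2}(2\deg S_k))$; palindromicity is what lets this relation be read in either orientation. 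The length hypothesis $M-N > |R|^{2\deg S_k} + 6\deg S_k$ is calibrated so that Lemma~\ref{lem:RankOfWindows}(\ref{item:RankOfWindows2}) applies to this sequence with this $v$ (whose associated polynomial is $(\prod_{r=1}^k(1-X^r))^2$, all of whose roots are nonzero roots of unity of order $\le k$): we may therefore write $\hat f(r) = \sum_i \alpha_i \rho_i^{\,r}$ on $[N+2\deg S_k, M-2\deg S_k]$ with each $\rho_i$ a root of unity of order at most $k$. Plugging this into the identity of the first paragraph with $I = [N+2\deg S_k, M-2\deg S_k]$ gives $\widehat{S_kf}(r) = 0$ for all $r$ with $[r-\deg S_k, r] \subseteq I$, i.e.\ for $r \in [N+3\deg S_k, M-2\deg S_k]$, as required.

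The genuinely structural input is Lemma~\ref{lem:RankOfWindows}: it is what converts ``one application of $S_k^2$ annihilates the coefficients on $[N,M]$'' together with the finite‑alphabet condition into ``the coefficients are a bounded combination of root‑of‑unity modes of order $\le k$'', after which a single $S_k$ already kills each mode --- this is the precise sense in which, for finite‑alphabet $f$, the second $S_k$ buys nothing. The points I expect to be fiddly rather than hard are the endpoint bookkeeping (checking that the interval of validity comes out exactly as $[N+3\deg S_k, M-2\deg S_k]$, and tracking which shifts push indices out of $[N,M]$) and matching the length bound precisely to the hypothesis of Lemma~\ref{lem:RankOfWindows}(\ref{item:RankOfWindows2}).
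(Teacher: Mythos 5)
Your proof is correct and takes essentially the same route as the paper: part~(i) via factoring out $(1-e^{ip\theta})$ (equivalently, annihilation of root-of-unity modes of order at most $k$ by $S_k$), and part~(ii) by converting the hypothesis into window-orthogonality for the $R$-valued coefficient sequence, applying Lemma~\ref{lem:RankOfWindows}(\ref{item:RankOfWindows2}) to decompose $\hat f$ into modes of period at most $k$ on $[N+2\deg S_k, M-2\deg S_k]$, and then invoking part~(i). Your explicit use of the palindromicity of $S_k^2$ to justify reading the convolution relations as window-orthogonality is a detail the paper leaves implicit, but it does not change the argument.
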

\begin{proof}
If $(\hat{f}(r))_{r \in [N,M]}$ is periodic with period $p \leq k$ we set $T  = S_k/(1 - e^{ip\theta})$ and notice that $\left( \widehat{Tf}(r) : r \in [N + \deg(T),M] \right)$ is periodic with period $p$. Thus 
\[ \widehat{S_kf}(r) = (T(1 - e^{ip\theta})f)^{\widehat{}}(r) = \widehat{Tf}(r) - \widehat{Tf}(r-p) = 0,
\] for all $r  \in [N + \deg(T) + p,M ] = [N + \deg(S_k),M ]$.
\paragraph{}
We shall deduce Conclusion~(\ref{item:indempotent}) of Lemma~\ref{lem:PropertiesOfSk} from Conclusion~(\ref{item:RankOfWindows2}) of Lemma~\ref{lem:RankOfWindows}. Write $n = \deg(S^2_k)$, express $S^2_k(\theta) = C_0 + C_1e^{i\theta} + \cdots + C_{n}e^{in\theta}$, for some $C_0,\ldots,C_{n} \in \mathbb{Q}$, and write $v = (C_0,\ldots,C_{n})$. The assumption of the lemma tells us that the vector $v$ lies in orthogonal complement to the space of $(n+1)$-windows of the sequence 
$\left(\hat{f}(r) : r \in [N,M] \right)$. Apply the second conclusion of Lemma \ref{lem:RankOfWindows} to find periodic sequences $(x_1(n))_{n\in [N+n,M-n]}, \ldots,  (x_l(n))_{n \in [N+n,M-n]}$, $l \in \mathbb{N}$, with period at most $k$, such that 
\[ \hat{f}(r) = x_1(r) + \cdots + x_l(r) ,
\] for all $r \in [N + n ,M - n]$. Here we have used the lower bound on $M-N$ to allow for the application of Lemma~\ref{lem:RankOfWindows}. It then follows from the first conclusion of the present lemma that $\widehat{S_kf}(r) = 0 $ for all $r \in [N + \deg(S_k) + n,M-n  ] = [N + 3\deg(S_k),M-2\deg(S_k)  ] $. \end{proof} 

We have now established the basic results that we shall need to prove Theorem~\ref{thm:CorWithPolyThm}, our result on the structure of exponential polynomials that correlate with a low-degree polynomial. 

\section{Reduction to The Structured Case } \label{sec:reduction}
The goal of this section is to prove Theorem~\ref{thm:CorWithPolyThm}. That is, we show that every exponential polynomial $f$ that ``correlates'' with a low-degree polynomial must have a special structure. It is easy to see how a theorem of this type connects with our main goal of finding roots of cosine polynomials: if $f$ has ``few'' zeros then there exists a low-degree polynomial $P$ for which $Pf \geq 0$ on $[0,2\pi]$, and so $\int Pf \geq \int |Pf|$. We make note of this construction here, although we shall use it only later.
\paragraph{}
If $f : [0,2\pi] \rightarrow \mathbb{R}$, is a continuous function, say that $\theta_0 \in [0,2\pi)$ is a \emph{sign change of} $f$ if $f(\theta_0) = 0$ and $f(\theta_0 + \varepsilon)f(\theta_0 - \varepsilon) \leq 0$ for all sufficiently small $\varepsilon \geq 0$. Given a continuous, \emph{even} function $f : \mathbb{R}/(2\pi\mathbb{Z})\rightarrow \mathbb{R}$ with finitely many (distinct) sign changes $\theta_1,\ldots,\theta_k \in [0,\pi]$, we define the \emph{companion polynomial} of $f$ to be \begin{equation} \label{equ:CompanionPoly} P(\theta) =  (-1)^p \prod_{i = 1}^{k} \left( \cos(\theta) - \cos(\theta_i) \right), \end{equation} where $p \in \{0,1\}$ is chosen so that $f(\theta)P(\theta) \geq 0$ for all $\theta \in [-\varepsilon,\varepsilon]$, for some $\varepsilon >0$. 

\begin{lemma} \label{prop:CompanionPoly} For $k \in \mathbb{N}$, let $f : \mathbb{R}/(2\pi\mathbb{Z}) \rightarrow \mathbb{R}$ be a continuous, even, real-valued function with $k$ distinct sign changes. The companion polynomial $P$ is such that 
\begin{enumerate}
\item $P$ is an exponential polynomial of degree $2k$ and is of the form $P(\theta) = \sum_{r = -k}^k \hat{P}(r)e^{ir\theta}$;
\item $P(\theta)f(\theta) \geq 0$, for all $\theta \in [0,2\pi]$.\qed \end{enumerate} \end{lemma}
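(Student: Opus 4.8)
The plan is to verify the two claimed properties of the companion polynomial $P(\theta) = (-1)^p \prod_{i=1}^k (\cos\theta - \cos\theta_i)$ directly, using only elementary facts about Chebyshev-type expansions and sign analysis of continuous functions. These are the only two assertions in the lemma, and neither requires any of the machinery built up earlier; the proof is a short verification.

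For property (i), the key observation is that $\cos\theta - \cos\theta_i = \tfrac12 e^{i\theta} + \tfrac12 e^{-i\theta} - \cos\theta_i$ is an exponential polynomial whose Fourier support is exactly $\{-1,0,1\}$. Multiplying $k$ such factors together, the Fourier support of the product lies in $[-k,k]$ (supports add under multiplication), and the top and bottom coefficients $\hat P(\pm k) = (-1)^p 2^{-k} \neq 0$, so $\deg(P) = 2k$ in the sense used in the paper (the width of the Fourier support). Moreover, since each factor is real and even in $\theta$ (invariant under $\theta \mapsto -\theta$), so is $P$, which gives the stated symmetric form $P(\theta) = \sum_{r=-k}^k \hat P(r) e^{ir\theta}$ with $\hat P(-r) = \hat P(r)$. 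This step is entirely routine.

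For property (ii), I would argue as follows. Since $\cos$ is a bijection from $[0,\pi]$ onto $[-1,1]$, the function $P$ restricted to $[0,\pi]$ is, up to the sign $(-1)^p$, a polynomial in $c = \cos\theta$ with roots exactly at $c = \cos\theta_1, \ldots, \cos\theta_k$, each simple. Hence $P$ changes sign on $[0,\pi]$ precisely at the points $\theta_1, \ldots, \theta_k$ and nowhere else. By definition $f$ is continuous and changes sign exactly at $\theta_1, \ldots, \theta_k$ in $[0,\pi]$ as well (these are its sign changes), so $Pf$ is continuous on $[0,\pi]$, vanishes at each $\theta_i$, and has no sign change in the interior of any subinterval between consecutive $\theta_i$'s (including the end intervals $[0,\theta_1]$ and $[\theta_k,\pi]$). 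Therefore $Pf$ has constant sign on each such subinterval; the choice of $p$ forces that sign to be $\geq 0$ on $[0,\varepsilon]$, and then an induction across the intervals — at each $\theta_i$ both $P$ and $f$ flip sign, so their product's sign is preserved — shows $Pf \geq 0$ throughout $[0,\pi]$. Finally, since both $P$ and $f$ are even and $2\pi$-periodic, $Pf$ is even, so $Pf \geq 0$ on all of $[0,2\pi]$.

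The only subtlety — and the one point I would be careful about — is the sign-propagation step: one must make sure that at each crossing point $\theta_i$ the function $f$ genuinely changes sign (not merely touches zero), which is guaranteed by the hypothesis that the $\theta_i$ are the \emph{sign changes} of $f$ and that there are exactly $k$ of them, matching the $k$ simple roots of the polynomial factor. If $f$ had a zero of even order somewhere it would not be counted as a sign change and would not appear among the $\theta_i$, so $Pf$ could fail to be nonnegative there; but the hypothesis explicitly rules this out by listing \emph{all} sign changes. I would also remark that one should handle the degenerate possibility $\theta_i \in \{0,\pi\}$ — but since $f$ is even, a sign change cannot occur at $0$ or $\pi$, so all $\theta_i$ lie in the open interval $(0,\pi)$ and the interval decomposition above is valid. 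Aside from this bookkeeping, the argument is immediate.
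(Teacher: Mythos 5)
The paper gives no proof of this lemma at all (it is stated as immediate from the construction of the companion polynomial), and your verification is exactly the intended argument: the Fourier support of the $k$-fold product of factors $\cos\theta-\cos\theta_i$ gives (i), while for (ii) one notes that $P$, viewed as a polynomial in $\cos\theta$ with simple roots at the distinct values $\cos\theta_1,\ldots,\cos\theta_k$, flips sign precisely where $f$ does, so the choice of $p$ together with evenness and periodicity propagates nonnegativity of $Pf$ across $[0,2\pi]$. Your proof is correct and matches the paper's (omitted) reasoning, including the bookkeeping that the $\theta_i$ necessarily lie in the open interval $(0,\pi)$.
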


We now turn to prove the core lemma (Lemma~\ref{lem:ReductionToStructure}) in the proof our ``structure theorem''. For our application to the problem of finding roots for cosine polynomials, we actually use a slightly different expression than the one outlined in Theorem~\ref{thm:CorWithPolyThm}. This form is described in the conclusion of Lemma~\ref{lem:ReductionToStructure2} and takes advantage of the extra symmetry we gain by assuming our polynomial is a cosine polynomial. 

\begin{lemma} \label{lem:ReductionToStructure}
For $\varepsilon >0$, $k,n \in \mathbb{N}$, let $R \subseteq \mathbb{Z}$ be a finite set and $f$ be a exponential polynomial with $\hat{f}(r) \in R$, for all $r \in \mathbb{Z}$. If there exists a non-zero exponential polynomial $P$ with $\deg(P) \leq k$ and such that  
\[ \left| \int Pf \right| \geq \varepsilon \int|Pf|, 
\] then there exists a non-zero exponential polynomial $Q$, with 
$\deg(Q) \leq 2^{12}(k\log_{(2)}(2k+3))^{2}$ for which 
\[ \log\log \left|\{ r \in \mathbb{Z} : \widehat{Qf}(r) \not= 0 \}\right| \leq  2^{14}k^2\log^2_{(2)}(2k+3)\log |R| + 8k\log M(R)   + \log \varepsilon^{-1} .\] 
\end{lemma}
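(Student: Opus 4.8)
The plan is to start from the correlation hypothesis $\left|\int Pf\right| \geq \varepsilon \int |Pf|$ and run an iterative "peeling" argument. The key observation is that multiplying $f$ by $S_k^2$ (where $\deg S_k \leq k^2$) annihilates all Fourier coefficients of $f$ on any integer interval where $\widehat{f}$ is eventually periodic with period at most $k$, by Lemma~\ref{lem:PropertiesOfSk}(\ref{item:SkKills}); and by Lemma~\ref{lem:PropertiesOfSk}(\ref{item:indempotent}), for coefficients from a finite set, $S_k^2$ kills essentially the same coefficients as $S_k$. So the object to control is the support $E = \{r : \widehat{S_k^2 f}(r) \neq 0\}$ — we will show either this support is small (which would finish things, since then $Q = S_k^2$ works directly), or else we can extract genuine structural information. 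The mechanism linking the correlation hypothesis to the support is that $P$ has degree $\leq k$, so by Parseval $\int P f = \sum_r \widehat{P}(-r)\widehat{f}(r)$ is a sum over a window of length $\leq 2k+1$; correlation forces $\widehat{f}$ to be "large on average" in a weighted sense over its support, while the Littlewood/Konyagin–McGehee–Pigno–Smith bound (Theorem~\ref{thm:LittlewoodConj}) says $\int |Pf| \gtrsim \sum |C_r|/r$, forcing the nonzero coefficients to be concentrated in a bounded-length initial segment — this is where the $\log\varepsilon^{-1}$ and $\log M(R)$ terms enter.

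The heart of the argument is the dichotomy on spaces of $d$-windows with $d = \deg(S_k^2) + 1 = k(k+1)+1$. Consider the coefficient sequence of $f$ restricted to a long interval. Either the space of $d$-windows has full rank $d$ — in which case Lemma~\ref{lem:InversionBound} (inversion with integer matrices) forces the coefficients to grow, contradicting boundedness of $R$ once the interval is long enough — or the rank is $< d$, in which case Lemma~\ref{lem:RankOfWindows} tells us $\widehat{f}$ is, on that interval, a sum of at most $k(k+1)$ periodic sequences each of period $\leq k$, hence annihilated by $S_k^2$ up to boundary terms. Iterating: each time we localize to an interval of length exceeding $|R|^d + O(d)$ on which the "full rank" case cannot persist, we peel off a periodic piece; the number of such intervals we must track before the support collapses is governed by how $|f(0)|$-sized quantities shrink under iterated logarithms, giving the doubly-logarithmic bound $\log\log|E|$ and explaining the $k^2 \log^2_{(2)}(2k+3)\log|R|$ term (the $\log_{(2)}(2k+3)$ factors come from the period bound $16t\log\log(t+3)$ in Lemma~\ref{lem:RankOfWindows}(\ref{item:RankOfWindows1}), with $t \leq d \leq k^2$). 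The degree bound $\deg(Q) \leq 2^{12}(k\log_{(2)}(2k+3))^2$ suggests $Q$ is built as a product of a bounded number of shifted $S_m$-type factors with $m$ up to roughly $k\log_{(2)}(2k+3)$, one factor per distinct period class encountered.

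The main obstacle I expect is bookkeeping the iteration so that the constants close: one must choose the interval lengths, the thresholds at which "full rank" is ruled out, and the number of peeling rounds so that (a) each application of Lemma~\ref{lem:RankOfWindows} and Lemma~\ref{lem:InversionBound} has its hypothesis $N > |R|^d + O(d)$ satisfied, (b) the boundary losses ($O(\deg S_k)$ coefficients lost at each end per round) do not accumulate past the support one is trying to bound, and (c) the accumulated periods multiply to a single period $\leq k\log_{(2)}(2k+3)$-ish value so that one clean factor $S_m^2$ (or a bounded product of such) does the final annihilation. Converting the "the support must be small or we get a contradiction" statement into the explicit double-logarithm requires carefully tracking that at stage $i$ the relevant interval length is forced below something like $|R|^{d} \cdot (\text{period})$, and that after $O(1)$ stages relative to $\log_{(2)}|f(0)|$ the process must terminate. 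I would set up a single quantitative claim — "if $\widehat{Qf}$ has support of size $> T$ then $f$ has a subinterval of coefficients of full-rank $d$-windows of length $> |R|^d + 3d$, hence $|f(0)| > (\text{something in } T)$" — and invert it; everything else is routine but delicate estimate-chasing.
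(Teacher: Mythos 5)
Your proposal assembles the right ingredients (the McGehee--Pigno--Smith bound applied to a product with an $S$-type polynomial, the rank dichotomy for spaces of windows, Lemma~\ref{lem:InversionBound}, Lemma~\ref{lem:RankOfWindows}, Lemma~\ref{lem:PropertiesOfSk}), but the way you combine them has two genuine gaps. First, your full-rank branch is wrong as stated: full rank of the $d$-windows of a bounded integer sequence is the generic situation and contradicts nothing about $R$. The correlation hypothesis must enter precisely here, and it enters through $P$: the paper works with windows of length $2k+1$ (matching $\deg P\le k$) of the sequence $\bigl(\widehat{Sf}(r)\bigr)$, and on an interval where $|\widehat{PSf}|$ is uniformly small the relation $\widehat{PSf}(r)=\sum_j \hat P(j)\widehat{Sf}(r-j)$ is a system $Ax=b$ with $A$ an integer window matrix, $x$ the coefficient vector of $P$, and $b$ small; if $A$ has full rank, Lemma~\ref{lem:InversionBound} forces $|\hat P|_\infty$ to be so small that $|\int Pf|=2\pi\bigl|\sum_{|r|\le k}\overline{\hat P}(r)\hat f(r)\bigr|$ drops below itself, a contradiction. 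Your sketch never couples the window analysis to the coefficients of $P$, so the dichotomy does not produce the key ``small on a long interval implies zero'' claim (the paper's Claim~\ref{claim:BigOrZero}), which is what converts the weighted Littlewood sum into $\log|B|$.

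Second, your choice of $S_k$ and window length $d=k(k+1)+1$ conflates the two conclusions of Lemma~\ref{lem:RankOfWindows}. In the rank-deficient case with a general orthogonal vector, conclusion~(\ref{item:RankOfWindows1}) only bounds the periods by $16t\log\log(t+3)$ with $t$ up to the window length; period $\le k$ (conclusion~(\ref{item:RankOfWindows2})) is available only when the orthogonal vector is the one coming from $S_k^2$, i.e.\ only after you already know $\widehat{S_k^2f}$ vanishes there, which is circular. With windows of length $2k+1$ the periods can reach roughly $32k\log\log(2k+3)$, which is exactly why the paper takes $S=S_D$ with $D=\lfloor 2^5k\log_{(2)}(2k+3)\rfloor$ and ends with $Q=PS$, $\deg Q\le 2k+2D^2$; patching your version by enlarging the period to what your $d\sim k^2$ windows actually give would push $\deg Q$ to order $k^4$, beyond the stated bound. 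Finally, the iterative ``peeling'' indexed by $|f(0)|$ is not needed and does not match the statement, whose bound is independent of $f$ beyond its coefficient set: the paper's argument is one-shot, and the double logarithm arises simply because the support bound has the form $\log|B|\le \varepsilon^{-1}2^{O(D^2\log|R|)}M(R)^{O(k)}$, whose logarithm is then taken once more.
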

\begin{proof}
Note that the statement of the theorem is trivial if $f$ is the zero polynomial and thus we may assume that there is some value of $r$ for which $\hat{f}(r) \not=0$.
Also note that $0 \in R$ as there must be some $r \in \mathbb{Z}$ for which $0 = \hat{f}(r) \in R$. Hence $|R| \geq 2$. We set $D = \lfloor 2^5k\log_{(2)}(2k+3) \rfloor$ and put $S(\theta) = S_D(\theta)$, as defined at (\ref{def:DefOfSk}). Observe that $\deg(S) \leq D^2$ and $|S|_{\infty} \leq 2^{D}$. Define \[ I = \frac{1}{2\pi}\left| \int Pf \right| \] and note that 
\[ I = \left| \sum_{r \in \mathbb{Z}} \overline{\hat{P}}(r)\hat{f}(r) \right| = \left| \sum_{r \in [-k,k]} \overline{\hat{P}}(r)\hat{f}(r) \right|, 
\] by using Parseval's identity along with the fact that $\hat{P}(r) = 0 $ for $r \not\in [-k,k]$. By our assumption on $P$, we have 
\begin{equation} \label{equ:absValInt} 2\pi I = \left| \int Pf \right| \geq \varepsilon \int |Pf|. 
\end{equation} We now obtain a lower bound on the right hand side integral. To do this we apply Theorem~\ref{thm:LittlewoodConj} to the polynomial
$PSf$ to obtain a function $h = h(PSf)$. Put $B = \{ r \in \mathbb{Z}: \widehat{PSf}(r) \not= 0 \}$ and express $B = \{ b_1 < b_2 < \cdots < b_m\}$, for some $m \in \mathbb{N}$. We have  
\begin{equation}\label{equ:applicationOfLittlewood} \int |Pf| \geq |S|^{-1}_{\infty} \int PSfh \geq (60|S|)^{-1}_{\infty}\sum_{i = 1}^m \frac{| \widehat{PSf}(b_i) |}{i},
\end{equation} where we have used the fact that $|h|_{\infty} \leq 1$ for the first inequality and Theorem \ref{thm:LittlewoodConj} for the second. The difficulty in analysing this sum comes from the fact that we have no direct control over the coefficients of $P$ and therefore little control over the individual quantities $|\widehat{PSf}(b_i) |$. However, we can get just enough control on the sum in Equation~\ref{equ:applicationOfLittlewood} for our purposes. In particular, we show that $| \widehat{PSf}(r) |$ cannot be small for too many consecutive values of $r$ without simply being zero. This means that not too many consecutive values of $\{|\widehat{PSf}(b_i)|\}_{i=1}^m$ can be small. 
\paragraph{}
To this end, let $R' = \{ \widehat{Sf}(r) \}_{r \in \mathbb{Z}}$ and notice that 
\[ R' \subseteq \left\lbrace a_1+ \cdots + a_{2^{D-1}} - a'_1 - \cdots - a'_{2^{D-1}}  : a_1,\ldots,a_{2^{D-1}},a'_1,\ldots,a'_{2^{D-1}} \in R \right\rbrace .
\] From this we conclude that $R'$ is a set of integers with $M(R') \leq 2^D M(R)$. It follows that $|R'| \leq 2^{D+1}M(R)+1$. Also set $G(R,k) = M(R')^{2k}(2k+1)^{k+1/2}$, for convenience in notation. We now prove a sequence of two central claims. Once we have established these, the proof of Lemma~\ref{lem:ReductionToStructure} will quickly follow. 
\begin{claim} \label{claim:BigOrZero}
If $M,N \in \mathbb{Z}$ are such that $M-N \geq |R|^{4D^2}M(R)^{2k+1} + 8D^2$ and  
\[ |\widehat{PSf}(r)| < I \cdot ((2k+1)M(R)G(R,k))^{-1}, 
\] for all $r \in [N,M]$, then $\widehat{PSf}(r) =0$ for all $r \in [N+5D^2, M - 5D^2]$. 
\end{claim}
\emph{Proof of claim: } First assume that the space of $(2k+1)$-windows of the sequence $(\widehat{Sf}(r))_{r \in [N,M]}$ has dimension $2k+1$. In this case we derive a contradiction. Write $P(\theta) = \sum_{r \in [-k,k]} \hat{P}(r)e^{ir\theta}$ and define a vector $x \in \mathbb{C}^{[-k,k]}$ to be 
$x = (\hat{P}(-k),\ldots, \hat{P}(0),\ldots,\hat{P}(k))$. Define $A$ to be a $(2k+1)\times (2k+1)$ matrix constructed by taking as rows $(2k+1)$ linearly independent $(2k+1)$-windows of $(\widehat{Sf}(r))_{r \in [N,M]}$. It follows that $A$ is an invertible matrix with entries in $R' \subseteq \mathbb{Z}$ and that $Ax = b$, where $b \in \mathbb{C}^{2k+1}$ is a vector with $|b|_{\infty} < I \cdot ((2k+1)M(R)G(R,k))^{-1}$. Now apply Lemma~\ref{lem:InversionBound} to deduce that $|x|_{\infty} < I/((2k+1)M(R))$. This leads to the contradiction
\[ I = \left| \sum_{r \in [-k,k]} \overline{\hat{P}}(r)\hat{f}(r) \right| \leq (2k+1)M(R)|x|_{\infty} < I.\]
\paragraph{}
Hence we may assume that the space of $(2k+1)$-windows of the sequence $(\widehat{Sf}(r))_{r \in [M,N]}$ has dimension less than $2k+1$.
This time our hope is to apply Lemma~\ref{lem:RankOfWindows}. First notice that
\[ M-N \geq |R|^{4D^2}M(R)^{(2k+1)} + 8D^2 > |R'|^{(2k+1)} + 3(2k + 1),\] where the first inequality holds by the assumption in the claim and the second inequality holds as  $ 2^{D+2}M(R) > 2^{D+1}M(R) + 1 \geq |R'| $, $|R| \geq 2$ and $2D > (2k+1)$. Also note that $\widehat{Sf}(r) \in R' \subseteq \mathbb{Q}$, for $r \in [N,M]$. Put $N' = N+(2k+1)$ and $M' = M-(2k+1)$. Now apply Lemma~\ref{lem:RankOfWindows} to find exponential polynomials $T_1(\theta),\ldots,T_l(\theta)$, for $l \in \mathbb{N}$, such that 1) $\widehat{T}_i(r)$ is supported on $[N',M']$ for each $i \in [l]$; 2) the $T_i$ are such that 
$(\widehat{T}_i(r))_{r \in [N',M']}$ is periodic with period at most $\lfloor 2^5k\log_{(2)}(2k+3)) \rfloor = D$, for all $i \in [l]$; and 3)
\[ \widehat{Sf}(r) = \sum_{i=1}^l \widehat{T}_i(r), 
\] for $r \in [N',M']$. By Conclusion~(\ref{item:SkKills}) in Lemma~\ref{lem:PropertiesOfSk}, it follows that 
\[ \widehat{S^2f}(r) = \sum_{i=1}^l \widehat{ST}_i(r) = 0, 
\] for all $r \in [N'+\deg(S),M'] $. By the lower bound on $M-N$, the inequality $\deg(S) \leq D^2$, and the fact that $M(R) \geq 1$, we have that 
\[ M' - N' - \deg(S) \geq |R|^{4D^2} + 8D^2 - \deg(S) - 2(2k+1) > |R|^{2\deg(S)} + 6\deg(S).\] Also $\hat{f}(r) \in R \subseteq \mathbb{Q}$, for all $r \in \mathbb{N}$ and so we may apply the second conclusion of Lemma~\ref{lem:PropertiesOfSk} to obtain
\[ \widehat{Sf}(r) = \sum_{i=1}^l \widehat{ST}_i(r) = 0 ,
\] for all $r \in [N'+ 4\deg(S),M'-2\deg(S)] $. This implies that $\widehat{PSf}(r) = 0 $ for all 
\[r \in [N' + 4\deg(S) + k,M'-k - 2\deg(S)] \supseteq [N+ 5D^2,M-5D^2]. \] This completes the proof of the claim. \qed
\paragraph{}

In the next claim, we use Claim~\ref{claim:BigOrZero} along with the bound on the sum at (\ref{equ:applicationOfLittlewood}) to derive an upper bound on set the size of the set $B = \{ r \in \mathbb{Z} : \widehat{PSf}(r) \not= 0 \} = \{ b_1 < b_2 < \cdots < b_m \}$.  

\begin{claim} \label{claim:BoundOnB} We have that
\[ \log |B| \leq 2^{10}\varepsilon^{-1}\cdot (2k+1)M(R)G(R,k)2^D|R|^{8D^2}M(R)^{2k+1}.\]
\end{claim}
\emph{Proof of claim:} 
Set $L = |R|^{8D^2}M(R)^{2k+1}$ and note that $L \geq |R|^{4D^2}M(R)^{2k+1} + 8D^2$, the quantity appearing in Claim~\ref{claim:BigOrZero}. We may assume that $|B| > 2^L$, otherwise we are done. 
Now, for $j \in [0,|B| -L]$, we claim that at least one of
\begin{equation} \label{equ:oneOfIfBig} |\widehat{PSf}(b_{j+1})|,|\widehat{PSf}(b_{j+2})|, \ldots, |\widehat{PSf}(b_{j+L})| 
\end{equation} is at least $\alpha = I/((2k+1)M(R)G(R,k))$. To see this, assume that $|\widehat{PSf}(b_{j+i})| < \alpha$, for each $i \in [L]$. As $\widehat{PSf}(r)$ is supported on $B$, it follows that $|\widehat{PSf}(r)| < \alpha$ for all $r \in [b_{j+1},b_{j+L}]$, an interval of length at least $L$. Hence we may apply Claim~\ref{claim:BigOrZero} to learn that $\widehat{PSf}(r) = 0 $ for all $r \in [b_{j+1} + 5D^2,b_{j+L} - 5D^2]$. So
\begin{align} L &= \left| B \cap [b_{j+1},b_{j+L}] \right| \leq 10D^2 + \left| B \cap [b_{j+1} + 5D^2, b_{j+L}- 5D^2] \right|   \\
 \label{equ:LatmostD} &= 10D^2 .\end{align} However, $L \geq 2^{8D^2} > 10D^2$, which contradicts (\ref{equ:LatmostD}). This implies that at least one of the quantities at (\ref{equ:oneOfIfBig}) is at least $\alpha$.
Recalling that we have set $m = |B|$ and that the logarithms are in base 2, it follows that 
\begin{align}
\label{equ:lowerboundsum} \sum_{i = 1}^m \frac{| \widehat{PSf}(b_i) |}{i} &\geq \sum_{j = 0}^{\lfloor (m-L)/L\rfloor} \sum_{i= 1}^L \frac{| \widehat{PSf}(b_{i+Lj}) |}{i + Lj} \\
&\geq \sum_{j = 1}^{\lfloor m/L\rfloor} \frac{\alpha}{jL} \\
&\geq \frac{(\alpha \log_e 2) \log(m/L -1) }{L} \\
\label{equ:boundOnLogSum} &\geq \frac{\alpha \log m  }{2L} \\ 
\label{equ:boundOnLogSum2} &= \frac{I\log |B|}{2(2k+1)M(R)G(R,k)L}. 
\end{align} To obtain the inequality at (\ref{equ:boundOnLogSum}) we have used the fact that $m > 2^L $ and $D \geq 16$ and thus $L \geq 2^{82^8}$. For this large $L$, we certainly have the the inequality $2^{L(1 - 1/(2\log_e 2))} > L$ and thus (\ref{equ:boundOnLogSum}). Putting the lower bound at (\ref{equ:boundOnLogSum2}) together with the bounds at (\ref{equ:absValInt}) and (\ref{equ:applicationOfLittlewood}) on the quantity at  (\ref{equ:lowerboundsum}), we arrive at the inequality
\[ 2\pi I \cdot (120\varepsilon^{-1}|S|_{\infty}(2k+1)M(R)G(R,k)L) \geq I \cdot \log |B|. \] Then apply the inequality $|S|_{\infty} \leq 2^{D}$, and the definition of $L$ to finish the proof of the claim. \qed 
\paragraph{}
From Claim~\ref{claim:BoundOnB}, we deduce Lemma~\ref{lem:ReductionToStructure}. 
In accordance with the statement of the Lemma, we choose $Q = PS$ and note that $\deg(PS) \leq 2k + 2D^2 \leq 2^{12}k^2\log^2_{(2)}(2k+3)$. 
All that remains is to bound $\left| \{ r \in \mathbb{Z} : \widehat{PSf}(r) \not= 0 \} \right| $. Of course, this set is exactly $B$. 
First observe that 
\[ G(R,k) = M(R')^{2k}(2k+1)^{k + 1/2} \leq 2^{2kD}M(R)^{2k}(2k+1)^{k+1/2},\] by using the bound $M(R') \leq 2^{D}M(R)$, obtained above. Thus 
\begin{align} 
\log G(R,k) &\leq 2kD + 2k\log M(R) . \\
&\leq (2^5 + 2)k^2 \log_{2}(2k+3) + 2k\log M(R). \end{align} 
Using this, along with Claim~\ref{claim:BoundOnB}, we have 
\begin{align*}
\log \log |B| &\leq 10 + \log 4k + \log M(R) + \log G(R,k)  + D + 8D^2\log |R| + (2k+1) \log M(R) + \log \varepsilon^{-1}\\
&\leq 2^{14}k^2\log^2_{(2)}(2k+3)\log |R| + 8k\log M(R)   + \log \varepsilon^{-1}.
\end{align*} This completes the proof of Lemma~\ref{lem:ReductionToStructure}.
\end{proof}
From Lemma~\ref{lem:ReductionToStructure} we may readily deduce our structural result on exponential polynomials, Theorem~\ref{thm:CorWithPolyThm}. 
Here we prove a slightly different form of this result that makes explicit use of the added symmetry gained from working with cosine polynomials. This is what we actually use for our task of finding zeros of cosine polynomials. The proof of Theorem~\ref{thm:CorWithPolyThm} follows from a very similar argument. 

\begin{lemma} \label{lem:ReductionToStructure2}
For $d,n,K \in \mathbb{N}$, let $R \subseteq \mathbb{Z}$ be a finite set and  
\[ f(\theta) = \sum_{r =0}^n C_r\cos(r\theta),
\] be a cosine polynomial, where $C_r \in R$, for all $r \in [0,n]$. If there is an exponential polynomial $Q$ of degree $d$ such that $\left|\{ r : \widehat{Qf}(r) \not= 0 \} \right| \leq K$ then there exists disjoint intervals $I_1,\ldots,I_l \subseteq \mathbb{N}$, $l \in [K]$, and cosine polynomials $f_1,\ldots,f_l,E$ so that 
\begin{enumerate}
\item \label{item:formOfFun}$f(\theta) = \sum_{i = 1}^l f_i(\theta) + E(\theta)$;
\item \label{item:FourierSupport} for each $i \in [l]$, $\hat{f}_i(r)$ is supported on $I_i \cup -I_i$;
\item \label{item:BoundOnError} $|E|_{\infty} \leq 8KM(R)(|R|^{d} + d)$;
\item \label{item:lemRtoS} for each $i \in [L]$ the sequence 
$(\hat{f}_i(r))_{r \in I_i}$ is periodic with period dividing $\left\lfloor 16d\log\log(d+3) \right\rfloor !$.
\end{enumerate}
\end{lemma}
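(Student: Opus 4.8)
The plan is to reduce to Conclusion~(\ref{item:RankOfWindows1}) of Lemma~\ref{lem:RankOfWindows} applied to the Fourier coefficient sequence of $f$. Since $f$ is a cosine polynomial with $C_r \in R$, the sequence $x = (\hat f(r))_{r \in [0,n]} = (C_r/2)_{r \in [0,n]}$ (with the obvious adjustment at $r=0$) lies in a finite rational set, and by the evenness of $f$ it suffices to analyze $\hat f$ on $[0,n]$ and then symmetrize. The hypothesis says $Qf$ has at most $K$ nonzero Fourier coefficients; equivalently, writing $Q(\theta) = \sum_{s} \hat Q(s) e^{is\theta}$ with $Q$ of degree $d$, the convolution $\widehat{Qf} = \hat Q * \hat f$ vanishes outside a set of size $\le K$. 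The nonzero set of $\widehat{Qf}$, intersected with $\mathbb{N}$, lies in at most $K$ maximal runs of consecutive integers; I would use the complement of a neighborhood of these runs to cut $[0,n]$ into at most $K$ intervals $J_1, \dots, J_l$ ($l \le K$) on each of which $\widehat{Qf}(r) = 0$ for all $r$, shrunk slightly at the endpoints.

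On each such interval $J_i$ (after trimming by $d$ at each end to get $I_i$, so that the convolution identity is an honest linear recurrence there), the vanishing of $\widehat{Qf}$ means precisely that the coefficient vector of $Q$ — say the vector $v$ of its $\le d+1$ relevant coefficients — is orthogonal to the space of $(d+1)$-windows of $(\hat f(r))_{r \in J_i}$. Hence that window space has rank $t < d+1$, and provided $J_i$ is long enough (length $> |R|^{d} + 3d$, which is exactly the hypothesis needed for Lemma~\ref{lem:RankOfWindows}), Conclusion~(\ref{item:RankOfWindows1}) gives $(\hat f(r))_{r \in I_i}$ as a sum of at most $t \le d$ periodic sequences, each with period at most $16 d \log\log(d+3)$. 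Summing at most $d$ sequences of periods each at most $P_0 := \lfloor 16 d\log\log(d+3)\rfloor$ produces a single sequence periodic with period at most $\mathrm{lcm}$ of those periods, which divides $P_0!$; this is exactly the bound in Conclusion~(\ref{item:lemRtoS}). I then set $f_i$ to be the cosine polynomial with $\hat f_i$ supported on $I_i \cup -I_i$ agreeing with $\hat f$ there, giving Conclusions~(\ref{item:formOfFun}) and~(\ref{item:FourierSupport}).

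The error term $E$ collects $\hat f(r)$ over the finitely many $r$ that were \emph{not} placed in any $I_i$: these are the $r$ lying within distance $d$ of one of the $\le K$ runs of the support of $\widehat{Qf}$, together with the possibly short intervals that were too short to apply Lemma~\ref{lem:RankOfWindows}. Here I should set up the cutting so that any interval shorter than $|R|^d + 3d$ is simply absorbed into $E$ as well. Counting: there are $O(K)$ "bad" intervals, each of length $O(|R|^d + d)$, and on each $|\hat f(r)| \le M(R)$; after symmetrizing to get a cosine polynomial and tracking the constant carefully one gets $|E|_\infty \le 8 K M(R)(|R|^d + d)$, which is Conclusion~(\ref{item:BoundOnError}).

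The main obstacle I anticipate is purely bookkeeping: organizing the decomposition of $[0,n]$ into "good" long intervals and "bad" short/boundary intervals so that (a) every good interval is genuinely long enough to invoke Lemma~\ref{lem:RankOfWindows}, (b) the total length swept into $E$ is $O(K(|R|^d+d))$, and (c) the endpoint trimming by $\deg(Q)$ is consistent with the window-orthogonality argument. There is no real analytic difficulty — the substance is all in Lemma~\ref{lem:RankOfWindows} — but one must be disciplined about which $r$'s go where and about the lcm-to-factorial step for the period bound.
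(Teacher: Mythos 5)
Your proposal is correct and follows essentially the same route as the paper: decompose $\mathbb{N}$ minus the support of $\widehat{Qf}$ into maximal intervals, discard/absorb the short ones and the $d$-trimmed ends into $E$ (with the same $8KM(R)(|R|^d+d)$ bookkeeping), and on each long interval use the orthogonality of $Q$'s coefficient vector to the $(d+1)$-windows of $(\hat f(r))$ to invoke Conclusion~(\ref{item:RankOfWindows1}) of Lemma~\ref{lem:RankOfWindows}, finishing with the lcm-divides-factorial step for the period bound. No gaps beyond the bookkeeping you already flag, which is exactly what the paper's proof carries out.
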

\begin{proof}
For $L \in [K+1]$, we partition $\mathbb{N} \setminus \{ r : \widehat{Qf}(r) \not= 0 \} = J_1 \cup \cdots \cup J_L$, where $J_1,\ldots,J_L \subseteq \mathbb{N}$ are maximal disjoint intervals and $\max J_i < \min J_{i+1}$, $i \in [L-1]$. Note that $J_L$ must be the unique interval of infinite length and that $\hat{f}(r) = 0$ for $r \in J_L$, as $f$ is an exponential polynomial. Let $J'_1,\ldots, J'_l$ be an enumeration of $\{ J_i :  |R|^d + 3d < |J_i| < \infty  \}$, where we have $l \in [L-1]$. For $i \in [l]$, let $I^+_i$ be the first $d$ and last $d$ elements of the interval $J'_i$ and let $I_i = J'_i \setminus I^+_i$.

Define \[ S = \{0\} \cup \{ r \in \mathbb{N}: \widehat{Qf}(r) \not= 0 \} \cup \bigcup_{i=1}^l I^+_i \cup \bigcup_{i : |J_i| \leq |R|^d + 3d } J_i . \] We have $|S| \leq 1 + K+ 2dK + K(|R|^d + 3d) \leq 8K(|R|^d + d)$. Then
\[ f(\theta) = \sum_{r\in S} C_r\cos(r\theta) + \sum_{i=1}^l \sum_{r \in I_i } C_r\cos(r\theta) ,
\] as the sets $S \cup I_1 \cup \cdots \cup I_l$ form a partition of $[0,n]$. We define the ``error term'' $E$ to be 
\[ E(\theta) = \sum_{r \in S} C_r \cos(r\theta)
\] and note that $|E(\theta)|_{\infty} \leq |S|M(R) \leq 8KM(R)(|R|^d + d)$. For $i \in [l]$, set
\[ f_i(\theta) = \sum_{r \in I_i } C_r\cos(r\theta) = \sum_{r \in I_i \cup -I_i} \hat{f}(r)e^{ir\theta} .
\] All that remains is to check that the conclusions of the Lemma hold for these choices. Conclusion~(\ref{item:formOfFun}), (\ref{item:FourierSupport}) and (\ref{item:BoundOnError}) follow directly from the construction. Conclusion~(\ref{item:lemRtoS}) follows from an application of Lemma~\ref{lem:RankOfWindows}. Fix $i \in [l]$. Since $\widehat{Qf}(r) = 0 $ for $r \in J'_i$, it follows that the the $d+1$ windows of the sequence $(\hat{f}_i(r))_{r \in J'_i}$ have rank at most $d$. Since $|J'_i| > |R|^d + 3d$, and $\hat{f}_i$ are rational numbers taking values in $R$, we may apply Lemma~\ref{lem:RankOfWindows} to find $m \in \mathbb{N}$ and sequences $(x_1(r))_{r \in I_i},\ldots , (x_m(r))_{r \in I_i}$, which are periodic with period at most $\lfloor 16d\log\log(d+3) \rfloor $ and  $\hat{f}_i(r) = x_1(r) + \cdots + x_m(r)$, for all $r \in I_i$. This implies that $(\hat{f}_i(r))_{r \in I_i}$ is periodic with period dividing $\lfloor 16d\log\log(d+3) \rfloor!$. 
\end{proof}

\section{The Structured Case} \label{sec:theStructuredCase}
In this section we prove that cosine polynomials which satisfy the strong structural property in the conclusion of Lemma~\ref{lem:ReductionToStructure2} must have many zeros. We prove our results here in two steps. 

Let us first define the function $D_n(\theta) = \sum_{r = 0}^n \cos(r\theta)$, for $n \in \mathbb{N}$. In our first step, we show that every function $f$ that is a linear combination of a bounded number of functions of the type $D_n$ must have many zeros. We then show that there is a low degree polynomial $F$ so that every cosine polynomial $f$ of the form in Lemma~\ref{lem:ReductionToStructure2}, is such that $Ff$ is the sum of a bounded number of the functions $D_n$. It will then follow that $Ff$ has many roots. But since $F$ has low degree, it must be that $f$ itself has many zeros . 

\paragraph{}
For our first step we use a basic and convenient fact about the function $D_n$ (See \cite{Zyg}, for example). 
 
\begin{lemma} \label{fact:DKernelBound} For $n \in \mathbb{N}$ and every interval $J \subseteq \mathbb{R}/(2\pi\mathbb{Z})$ we have 
\[ \left| \int_J D_n(\theta)  \right| \leq 10.\] \qed
\end{lemma}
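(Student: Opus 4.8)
The plan is to reduce the statement to the classical boundedness of the partial sums of the Fourier series of the sawtooth function. Since $D_n$ is a trigonometric polynomial, we may integrate its Fourier series term by term. Writing the arc $J$ as an interval $[a,b]\subseteq\mathbb{R}$ with $b-a=|J|\le 2\pi$ (legitimate by the $2\pi$-periodicity of $D_n$), and using $\int_0^\phi\cos(r\theta)\,d\theta=\sin(r\phi)/r$ for $r\ge 1$, one obtains
\[
\int_J D_n(\theta)\,d\theta \;=\; (b-a)\;+\;\sum_{r=1}^n\frac{\sin(rb)-\sin(ra)}{r}\;=\;|J|\;+\;g_n(b)-g_n(a),
\qquad g_n(\phi):=\sum_{r=1}^n\frac{\sin(r\phi)}{r}.
\]
As $g_n$ is odd and $2\pi$-periodic, it then suffices to bound $\sup_{n\ge 1}\sup_{\phi}|g_n(\phi)|$ by an absolute constant and add $|J|\le 2\pi$.

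The key input is the classical fact (see \cite{Zyg}) that this supremum equals the Gibbs constant $\int_0^{\pi}\frac{\sin t}{t}\,dt<1.86$. A self-contained argument runs as follows: $g_n'(\phi)=\sum_{r=1}^n\cos(r\phi)=\frac{\sin((n+\frac12)\phi)}{2\sin(\phi/2)}-\frac12$ by the Dirichlet kernel identity, so on $(0,\pi)$ the function $g_n$ first increases and attains its maximum at its first stationary point $\phi_1\approx \pi/(n+\tfrac12)$ (and $g_n>0$ on $(0,\pi)$, the Fej\'er--Jackson inequality). Estimating $g_n(\phi_1)=\int_0^{\phi_1}\big(D_n(t)-1\big)\,dt$ by comparing $\frac{1}{2\sin(t/2)}$ with $\frac1t$ and substituting $u=(n+\tfrac12)t$ gives $g_n(\phi_1)\le \mathrm{Si}\big((n+\tfrac12)\phi_1\big)+o(1)\le \mathrm{Si}(\pi)$, where $\mathrm{Si}(x)=\int_0^x\frac{\sin t}{t}\,dt$ satisfies $0<\mathrm{Si}(x)\le\mathrm{Si}(\pi)$ for all $x>0$ (because $\mathrm{Si}$ increases on $[0,\pi]$ and the successive arches of $\sin t/t$ have decreasing area). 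Combining with the display above yields $\big|\int_J D_n\big|\le 2\pi+2\,\mathrm{Si}(\pi)<10$.

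The only real obstacle is the constant. A crude bound $\sup|g_n|\le 2$ is easy (split the sum at $r\approx 1/\phi$, bound the head by $\sum_{r\le 1/\phi}\phi\le 1$ and the tail by Abel summation using $\big|\sum_{r=M}^N\sin(r\phi)\big|\le 1/|\sin(\phi/2)|$), but this only gives $2\pi+4>10$, so the sharp value $\mathrm{Si}(\pi)$ of the Gibbs constant is genuinely needed — equivalently, one must locate the maximum of $g_n$ at its first stationary point and evaluate it there. Every other ingredient — the termwise integration, the periodicity reduction, and the passage to $g_n$ — is routine, and in the interest of brevity I would simply cite \cite{Zyg} for the sharp partial-sum bound, as is done here.
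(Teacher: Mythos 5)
Your argument is correct, and it is essentially the paper's route: the paper offers no proof beyond citing \cite{Zyg}, and what you reconstruct — writing $\int_J D_n=|J|+g_n(b)-g_n(a)$ with $g_n(\phi)=\sum_{r=1}^n\sin(r\phi)/r$ and invoking the classical uniform bound $\sup_{n,\phi}|g_n(\phi)|\le\int_0^\pi\frac{\sin t}{t}\,dt<1.86$ from that same reference — is exactly the standard fact behind that citation, giving $2\pi+2\,\mathrm{Si}(\pi)<10$. The only cosmetic blemish is the ``$+o(1)$'' in your self-contained sketch (the comparison of $\frac{1}{2\sin(t/2)}$ with $\frac1t$ on $[0,\phi_1]$ in fact yields a clean inequality $g_n(\phi_1)\le\mathrm{Si}\bigl((n+\tfrac12)\phi_1\bigr)\le\mathrm{Si}(\pi)$, since the correction term is dominated by the subtracted $\phi_1/2$), but since you defer to \cite{Zyg} for the sharp bound this does not affect correctness.
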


We now show that every (bounded) linear combination of the functions $D_n$ has many roots. For this, we use a simple averaging argument:  Lemma~\ref{fact:DKernelBound}, tells us that $g$ has small average value on every interval, while Theorem~\ref{thm:LittlewoodConj} tells us that $\int |g|$ is large. Therefore $g$ must have many zeros. 

\begin{lemma} \label{lem:ZerosForSumsOfD} For $l,M,P \in \mathbb{N}$, let $A_1,\ldots,A_l$ be real numbers which are either $0$ or satisfy $1/P \leq |A_i| \leq 2M$ and let $g$ be a cosine polynomial of the form 
\[ g(\theta) = \sum_{i=1}^l A_i\sum_{r \in I_i } \cos(r\theta) + E(\theta), 
\] where $I_1,\ldots,I_l \subseteq \mathbb{N}$ are finite intervals which satisfy $\max I_i < \min I_{i+1}$, $i \in [l-1]$, and $E$ is a real-valued function. 
Set $Z = (|g(0)| - |E|_{\infty})/M$. If $Z > 0 $ then $g(\theta)$ has at least \begin{equation} \label{equ:BoundInDzeros} \frac{\log(Z)}{240P(20lM + \pi|E|_{\infty})}-1 \end{equation} sign-changes in $[0,2\pi]$.
\end{lemma}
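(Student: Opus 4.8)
\textbf{Proof proposal for Lemma~\ref{lem:ZerosForSumsOfD}.}

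The plan is to run a weighted averaging argument that plays two facts against each other: on the one hand, $g$ is (up to the error $E$) a bounded linear combination of Dirichlet-type kernels $\sum_{r\in I_i}\cos(r\theta)$, each of which has uniformly bounded integral over \emph{any} interval by Lemma~\ref{fact:DKernelBound}; on the other hand, by the Littlewood-conjecture bound (Theorem~\ref{thm:LittlewoodConj}) applied to the exponential-polynomial part of $g$, the $L_1$-norm $\int|g|$ is forced to be large — roughly $\gtrsim \frac{1}{P}\log(\text{stuff})$ — because the nonzero coefficients $A_i$ are each at least $1/P$ in modulus and the intervals $I_i$ are disjoint and ordered. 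If $g$ had few sign changes, say $m$ of them, then $[0,2\pi]$ splits into $m+1$ intervals on each of which $g$ has constant sign; on each such interval $J$ we have $\int_J |g| = \bigl|\int_J g\bigr| \le \sum_i |A_i|\,\bigl|\int_J \sum_{r\in I_i}\cos(r\theta)\bigr| + \int_J|E| \le 10\sum_i|A_i| + |J|\,|E|_\infty \le 20lM + \pi|E|_\infty$ (after observing $\sum_i|A_i|\le 2lM$ and $\sum|J|=2\pi$). Summing over the $m+1$ sign-constancy intervals gives $\int_0^{2\pi}|g| \le (m+1)(20lM+\pi|E|_\infty)$. Combining with the lower bound on $\int|g|$ and solving for $m$ yields the claimed bound.

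In more detail, here are the steps in order. \emph{Step 1:} Write $g = h + E$ where $h(\theta) = \sum_{i=1}^l A_i \sum_{r\in I_i}\cos(r\theta) = \sum_{r} \hat h(r) e^{ir\theta}$ is an even exponential polynomial whose Fourier coefficients are: $A_i$ on the frequencies $r\in I_i$ (and their negatives), and $0$ elsewhere; note $\hat h(0)$ equals some $A_{i_0}$ or $0$ depending on whether $0$ lies in some $I_i$. \emph{Step 2:} Apply Theorem~\ref{thm:LittlewoodConj} to $h$: there is $\|\mathfrak h\|_\infty\le1$ with $\int h\mathfrak h \ge \tfrac{1}{60}\sum_{r>0:\hat h(r)\ne0}\frac{|\hat h(r)|}{r}$, and since on each nonzero block $I_i=[a_i,b_i]$ the coefficient is $|A_i|\ge 1/P$, we get $\sum \frac{|\hat h(r)|}{r} \ge \frac1P\sum_{i}\sum_{r\in I_i}\frac1r \ge \frac1P\sum_i \log\frac{b_i+1}{a_i+1}$; telescoping over the disjoint ordered intervals, $\sum_i\log\frac{b_i+1}{a_i+1} \ge \log\bigl(\frac{b_{\max}+1}{a_{\min}+1}\bigr)$ is at least $\log$ of (roughly) the top frequency, which one bounds below using $|g(0)|$. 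Specifically, $|h(0)| = \sum_i A_i|I_i| \ge$ something, and more robustly $|g(0)| \le |h(0)| + |E|_\infty$, while $|h(0)| \le 2M \cdot(\text{total length}) \le 2M\cdot(\text{top frequency}+1)$, so the top frequency is $\ge (|g(0)|-|E|_\infty)/(2M) - 1$; feeding this in gives $\int|h| \ge \int h\mathfrak h \gtrsim \frac{1}{60P}\log\bigl(\tfrac{(|g(0)|-|E|_\infty)}{2M}\bigr) \ge \frac{1}{60P}\log Z$ after absorbing constants (here one must be slightly careful to route the $\log$ past the ``$-1$'' and the factor $2$; the $240$ in the denominator and the ``$-1$'' in the statement are exactly the slack absorbing these adjustments). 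Then $\int|g| \ge \int|h| - \int|E| \ge \frac{\log Z}{120P} - \pi|E|_\infty$, or one keeps the $|E|_\infty$ term inside and rearranges. \emph{Step 3:} The sign-change / interval-partition upper bound $\int|g| \le (m+1)(20lM + \pi|E|_\infty)$ as described above, using Lemma~\ref{fact:DKernelBound} and $\sum_i|A_i|\le 2lM$. \emph{Step 4:} Combine Steps 2 and 3 and solve for $m$ to extract the displayed lower bound $\frac{\log Z}{240P(20lM+\pi|E|_\infty)} - 1$.

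The main obstacle I anticipate is \emph{Step 2}, specifically getting a clean lower bound on $\int|h|$ in terms of $|g(0)|$ (equivalently $Z$) rather than in terms of the top frequency $n$ directly — one needs to convert ``$h$ has a high frequency present'' into ``$h(0)$, hence $g(0)$, is large,'' which is only true because all nonzero coefficients have the \emph{same} sign pattern contribution at $\theta=0$ (all cosines equal $1$ there) so $h(0)=\sum_i A_i|I_i|$ genuinely controls, and conversely the number of frequencies is at most the top frequency plus one, so a large $h(0)$ with bounded coefficients forces a high top frequency and hence a large telescoped logarithmic sum. Keeping the constants ($60$, the factor $2$, the ``$-1$'', the $\pi$ from $|J|\le 2\pi$) consistent so that everything collapses to exactly $240P(20lM+\pi|E|_\infty)$ and a single subtracted $1$ is the fiddly part, but it is routine once the architecture above is in place; nothing deep is needed beyond Theorem~\ref{thm:LittlewoodConj} and Lemma~\ref{fact:DKernelBound}.
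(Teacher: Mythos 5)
Your overall architecture---a lower bound on $\int|g|$ from Theorem~\ref{thm:LittlewoodConj}, an upper bound of the form $(m+1)\cdot(\text{const})$ from Lemma~\ref{fact:DKernelBound} applied on the sign-constancy intervals, then solving for $m$---is exactly the paper's. The genuine gap is in your Step 2. Theorem~\ref{thm:LittlewoodConj} weights $|C_r|$ by the \emph{rank} $r$ of the frequency $a_r$ in the increasing enumeration of the support, not by the frequency itself; you instead use the weaker Hardy-type form $\sum_{r}|\hat h(r)|/r$ with $r$ the frequency, and that weaker form cannot prove the lemma. Concretely: your telescoping inequality is reversed, since for disjoint increasing intervals $\log\frac{b_{\max}+1}{a_{\min}+1}=\sum_i\log\frac{b_i+1}{a_i+1}+\sum_i\log\frac{a_{i+1}+1}{b_i+1}\ \ge\ \sum_i\log\frac{b_i+1}{a_i+1}$, so the sum of block-logs is \emph{at most}, not at least, $\log\frac{b_{\max}+1}{a_{\min}+1}$; and in any case that ratio has nothing to do with $Z$. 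Take $l=1$, $A_1=1$, $I_1=[N,2N]$, $E\equiv 0$, $M=P=1$: then $Z=|g(0)|=N+1$ is huge, while $\frac{1}{60P}\sum_{r\in I_1}\frac1r\approx\frac{\log 2}{60}$ is bounded, so your Step 2 gives a lower bound on $\int|h|$ that does not grow with $Z$, and no growing number of sign changes can be extracted (the lemma is still true in this example, but only because the rank-weighted bound sees all $N+1$ frequencies with weights $1/1,1/2,\dots$). Your proposed repair via the \emph{top frequency} fails for the same reason: a high top frequency does not make $\sum_i\log\frac{b_i+1}{a_i+1}$ large when $a_{\min}$ is also large.

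The correct move, and the paper's, is to keep the rank form: applying the theorem to $h=g-E$ and listing the frequencies of the nonzero blocks (the paper enumerates the blocks from the top down, which amounts to listing the negative frequencies in increasing order), the $j$-th listed frequency has coefficient of modulus at least $1/(2P)$ and weight $1/j$, so $\int|h|\ \ge\ \frac{c}{P}\log T$ where $T=\sum_{i:A_i\neq0}|I_i|$ is the \emph{total number} of nonzero frequencies, regardless of where the intervals sit. One then converts $|f(0)|$ into a count, not a top frequency: $|f(0)|\le\sum_i|A_i||I_i|+|E|_\infty\le 2MT+|E|_\infty$, hence $T\ge(|f(0)|-|E|_\infty)/(2M)$, which is exactly how $\log Z$ enters. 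Your Step 3 matches the paper up to two harmless factor-of-two slips (each block $\sum_{r\in I_i}\cos(r\theta)=D_{b_i}-D_{a_i}$ contributes $20$, not $10$, per interval, and $|J|\le 2\pi$, not $\pi$), which the constant $240P(20lM+\pi|E|_\infty)=120P(40lM+2\pi|E|_\infty)$ is there to absorb.
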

\begin{proof}
Suppose that $g$ has exactly $k$ sign changes and that $J_1 \cup \cdots \cup J_k = \mathbb{R}/(2\pi\mathbb{Z})$ are the maximal intervals on which the sign of $g$ is constant. We have that 
\begin{equation} \label{equ:fewRootsSimplify}  \int |g| =  \sum_{i = 1}^k \left|\int_{J_i}g(\theta)\right| .
\end{equation} We use Theorem~\ref{thm:LittlewoodConj} (the solution to Littlewood's $L_1$-norm Conjecture) to estimate the left-hand-side. Define integer intervals $\tilde{I}_1,\ldots, \tilde{I}_{l'} \subseteq\mathbb{N}$ and $B_1,\ldots,B_{l'} \in \mathbb{R}$ to be such that $l'\in [l]$ and $\tilde{I}_i$ is the $i$th interval, $I_j$ say, among $I_l,I_{l-1} \ldots,I_1$ (in this order) for which $A_j \not= 0 $. We then set $B_i = A_j$. Letting $t_0 = 0$ and $t_i = |\tilde{I}_1| + \cdots + |\tilde{I}_i|$, for $i \in [l']$, we have
\begin{align}
\int |g| &\geq \sum_{i = 0}^{l'-1}\frac{|B_i|}{120} \sum_{r = t_i+1}^{t_{i+1}} \frac{1}{r} - 2\pi|E|_{\infty} \\
\label{equ:LowerboundStrcLemma} &\geq \frac{\log t_{l'} }{120P}- 2\pi|E|_{\infty}.
\end{align}
We now obtain a lower bound on $t_{l'}$ in terms of $|g(0)|$. By the definition of $g$, we have 
\begin{align*} |g(0)|/(2M) &\leq \sum_{i=1}^l \sum_{r \in I_i} |A_i|/(2M) + |E|_{\infty}/(2M) \\
&\leq |\tilde{I_1}| + \cdots + |\tilde{I_{l'}}| + |E|_{\infty}/(2M) \\
&\leq t_{l'} + |E|_{\infty}/(2M).
\end{align*} Thus $(|g(0)| - |E|_{\infty})/(2M) \leq t_{l'}$.
We now turn to estimate the right hand side of (\ref{equ:fewRootsSimplify}) from above. Start by writing $a_i = \min I_i - 1$ and $b_i = \max I_i$, for each $i \in [l]$. Recall that we have defined $D_n(\theta) = \sum_{r =  0 }^n \cos(r\theta) $. Thus we have  
\[ g(\theta) = \sum_{i =1}^l A_i (D_{b_i}(\theta) - D_{a_i}(\theta)) + E(\theta). \] Then apply Lemma~\ref{fact:DKernelBound} to learn that
\[ \left| \int_{J_j} g \right| \leq  20\sum_{i = 1}^l |A_i| + 2\pi |E|_{\infty}, \] for all $j \in [k]$. Therefore the right hand side (\ref{equ:fewRootsSimplify}) is bounded above by \[ 20k\sum_{i = 1}^l |A_i| + 2\pi k|E|_{\infty} \leq k(40lM + 2\pi |E|_{\infty}).  \] Putting this together with the bound at (\ref{equ:LowerboundStrcLemma}), we obtain a lower bound on $k$, the number of roots of $g$, as in the statement of the Lemma.  
\end{proof}

We now turn to prove that polynomials that are as in the conclusion to Lemma~\ref{lem:ReductionToStructure2} have many zeros. To do this, we show that we can massage a polynomial of the form in Lemma~\ref{lem:ReductionToStructure2} into the form of a polynomial in Lemma~\ref{lem:ZerosForSumsOfD} at the cost of introducing only a few more zeros.  

\begin{lemma} \label{lem:MainStructureLemma}
For $l,K,M \in \mathbb{N}$, let $I_1,\ldots,I_l \subseteq \mathbb{N}$ be finite, disjoint intervals. Let $f, f_1,\ldots,f_l,E$ be cosine polynomials such that
\[ f(\theta) = \sum_{i = 1}^l f_i(\theta) + E(\theta), 
\] and the following hold. 
\begin{enumerate}
\item For each $i \in [l]$, the sequence $(\hat{f}_i(r))_{r \in \mathbb{Z}}$ is supported on $-I_i \cup I_i$ and $(\hat{f}_i(r))_{r \in I_i}$ is periodic with period dividing $P$. 
\item For all $i \in [l]$ and $r \in \mathbb{Z}$ we have $2\hat{f}_i(r) \in \mathbb{Z}$ and $|\hat{f}_i(r)| \leq M$.
\item We have $|E|_{\infty} \leq K$. 
\end{enumerate} If $Y = |f(0)| - 4Pl + K/M  > 0$ then $f$ has at least $(2^{14}P^2Ml + 2^{10}KP))^{-1}\log(Y) - 2P-1$ roots in $[0,2\pi]$. 
\end{lemma}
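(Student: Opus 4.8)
The plan is to reduce to Lemma~\ref{lem:ZerosForSumsOfD} by replacing each $f_i$, whose Fourier coefficients are only \emph{periodic} on $I_i$ (with period $\le P$), by a small linear combination of ``flat'' pieces $\sum_{r \in J}\cos(r\theta)$, at the cost of introducing only a controlled number of extra sign changes. The key observation is that if $(\hat f_i(r))_{r \in I_i}$ has period $p_i \le P$, then on $I_i$ we may group the coefficients by residue class mod $p_i$: for each residue $a \in [0,p_i-1]$ the subsequence is constant, equal to some $c_{i,a}$ with $2c_{i,a}\in\mathbb{Z}$ and $|c_{i,a}|\le M$. So $f_i(\theta) = \sum_{a} c_{i,a}\sum_{r\in I_i,\ r\equiv a} \cos(r\theta)$, and each inner sum $\sum_{r\in I_i,\ r\equiv a}\cos(r\theta)$ is, after the substitution $\theta\mapsto p_i\theta$ on an appropriate arithmetic progression, essentially a difference of two Dirichlet-type kernels $D_{n}$. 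More precisely, summing $\cos(r\theta)$ over an arithmetic progression $r = a, a+p, \ldots, a+(m-1)p$ can be written as $\mathrm{Re}\big(e^{ia\theta}\frac{e^{imp\theta}-1}{e^{ip\theta}-1}\big)$, which one checks equals a bounded ($O(1)$) combination of functions of the form $D_N(p\theta)$ and $D_N(p\theta)$ shifted, plus lower-order cosine terms — the point being that its integral over any interval is $O(1/1) = O(1)$ uniformly, exactly as in Lemma~\ref{fact:DKernelBound} after rescaling $\theta \mapsto p\theta$ (rescaling multiplies the number of sub-intervals by $p$ but keeps each integral bounded).

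With this decomposition, $f(\theta) = \sum_{i=1}^{l}\sum_{a=0}^{p_i-1} c_{i,a}\,g_{i,a}(\theta) + E(\theta)$, a linear combination of at most $Pl$ functions $g_{i,a}$, each of which is a difference of two rescaled Dirichlet kernels whose integral over any interval $J$ is $O(1)$ — say bounded by an absolute constant, which after tracking constants becomes the $P^2$-type and $K$-type terms in the claimed bound. The coefficients $c_{i,a}$ are either $0$ or satisfy $1/2 \le |c_{i,a}| \le M$ (since $2c_{i,a}$ is a nonzero integer), so $P = 2$ in the role of the parameter ``$P$'' of Lemma~\ref{lem:ZerosForSumsOfD}, and $M$ plays the role of the upper bound. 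I would then run the same averaging argument as in the proof of Lemma~\ref{lem:ZerosForSumsOfD}: if $f$ has $k$ sign changes, partition $\mathbb{R}/(2\pi\mathbb{Z})$ into $k$ monotone-sign intervals $J_1,\ldots,J_k$; on each, $|\int_{J_j} f| = O(PlM + |E|_\infty)$ by the kernel bound (the factor $P$ coming from the $\theta\mapsto p\theta$ rescaling splitting each $J_j$ into up to $P$ pieces), so $\int|f| \le k\cdot O(PlM + K)$. For the lower bound on $\int|f|$, apply Theorem~\ref{thm:LittlewoodConj} to $f$ directly (its nonzero coefficients on the $I_i$, listed in decreasing order of index block, have magnitude $\ge 1/2$ over geometrically growing ranges of indices) to get $\int|f| \gg \log(t) - 2\pi K$ where $t$ is the total length of the supports, and then bound $t$ below by $|f(0)|/(2M) - K/(2M)$ exactly as in Lemma~\ref{lem:ZerosForSumsOfD}; the $8P^2l$ correction in $Y$ absorbs the contribution of the $I_i^+$-type trimmed endpoints and the $D_N(p\theta)$ ``boundary'' terms. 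Combining the two bounds on $\int|f|$ gives the stated lower bound on $k$, modulo bookkeeping of the constants $2^{14}, 2^{10}$ and the additive $P^2+1$.

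The main obstacle I anticipate is the clean handling of the arithmetic-progression sums $\sum_{r\equiv a \ (p)} \cos(r\theta)$: one must verify that each such sum genuinely has the ``bounded integral on every interval'' property with a constant independent of everything but with only a factor-$p$ loss, and that it decomposes into $O(1)$ honest Dirichlet kernels $D_N$ (rescaled) rather than something with a growing number of pieces — this is where the substitution $\theta \mapsto p\theta$ and careful use of $\frac{e^{imp\theta}-1}{e^{ip\theta}-1} = \sum_{j=0}^{m-1} e^{ijp\theta}$ must be married to Lemma~\ref{fact:DKernelBound}. A secondary, purely clerical, obstacle is tracking the explicit constants so that the $8P^2 l$, the $2^{14}P^3Ml$, the $2^{10}KP$, and the $P^2+1$ all come out as stated; I would be generous with constants throughout and only tighten at the end. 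Everything else is a direct re-run of the averaging argument already used for Lemma~\ref{lem:ZerosForSumsOfD}.
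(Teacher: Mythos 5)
Your reduction hinges on the claim that each arithmetic--progression piece $g_{i,a}(\theta)=\sum_{r\in I_i,\ r\equiv a\ (p_i)}\cos(r\theta)$ satisfies a uniform bounded--integral property in the spirit of Lemma~\ref{fact:DKernelBound}, with a loss of at most a factor $p_i$. That claim is false whenever $a\not\equiv 0\pmod{p_i}$, and this breaks the averaging step. Writing the progression sum as $\mathrm{Re}\bigl(e^{ia\theta}\sum_{j}e^{ijp\theta}\bigr)=\cos(a\theta)\sum_j\cos(jp\theta)-\sin(a\theta)\sum_j\sin(jp\theta)$, the second term involves the \emph{conjugate} Dirichlet kernel, whose primitive is not uniformly bounded; there is no cancellation in general. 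Concretely, for $J=[0,2\pi/3]$ one has
\[ \int_J \sum_{\substack{r\le N\\ r\equiv 1\ (3)}}\cos(r\theta)\,d\theta \;=\; \sum_{\substack{r\le N\\ r\equiv 1\ (3)}}\frac{\sin(2\pi r/3)}{r}\;=\;\frac{\sqrt3}{2}\sum_{\substack{r\le N\\ r\equiv 1\ (3)}}\frac1r\;\sim\;\frac{\sqrt3}{6}\log N, \]
which grows with the block length. Since the lengths $|I_i|$ are not controlled by $P,M,l,K$, your bound $|\int_{J_j}f|=O(PlM+K)$ on each constant--sign interval fails (it degrades to $O(PlM\log N+K)$), and the comparison with the Littlewood lower bound $\int|f|\gtrsim\log N$ then yields only a trivial, constant lower bound on the number of sign changes. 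The rescaling $\theta\mapsto p\theta$ only rescues the residue class $a=0$; the twist $e^{ia\theta}$ is exactly the obstruction.

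The paper circumvents this by not decomposing into residue classes at all: it multiplies $f$ by the auxiliary cosine polynomial $F(\theta)=\prod_{t=1}^{P}\bigl(\tfrac1t\sum_{r=0}^{t-1}\cos(r\theta)\bigr)$, of degree at most $P^2$ and sup--norm at most $1$. The factor corresponding to $t=p_i$ averages the periodic coefficients of $f_i$ over a full period, so that $Ff_i$ equals $A_i\sum_{r\in I_i''}\cos(r\theta)$ plus a boundary error of size $O(P^2M)$, where $A_i=(2/p_i)\sum_{r}\hat f_i(r)$ over one period; the hypothesis $2\hat f_i(r)\in\mathbb{Z}$ gives $|A_i|\ge 1/P$ when $A_i\neq0$, and $|A_i|\le 2M$. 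Thus $Ff$ is an honest linear combination of differences of Dirichlet kernels plus an error $\tilde E$ with $|\tilde E|_\infty\le K+8P^2Ml$, to which Lemma~\ref{lem:ZerosForSumsOfD} applies directly; one then deducts at most $P^2$ roots for the factor $F$, which is where the $-P^2$ in the statement comes from. If you want to keep your residue--class decomposition, you would need to first restore this averaging in some form (i.e.\ work with the mean of the coefficients over a period rather than with individual residue classes), which is essentially the role $F$ plays.
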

\begin{proof}
Define the cosine polynomial 
\[ F(\theta) =  \frac{1}{P}\Big( 1 + \cos(\theta) + \cdots + \cos((P-1)\theta) \Big)
\] and note that $\deg(F) = 2(P-1)$ and $|F|_{\infty} \leq 1$. 
For $i \in [l]$, let us say that the sequence $(\hat{f}_i(r))_{r \in I_i}$ is periodic with period $p_i$, which divides $P$. Write $I_i = [a_i,b_i]$ and set 
\begin{equation}\label{equ:DefOfAi}
A_i =  2/p_i\sum_{r=a_i}^{a_i + p_i-1} \hat{f}_i(r). \end{equation} Due to the periodicity of $\hat{f}_i$ on $I_i$, for any $t$ such that $t,t+p_i-1 \in I_i$, we have
\[ A_i =  2/p_i\sum_{r = t}^{t + p_i-1}\hat{f}_i(r).\] We observe the following.
\begin{claim}
There exist intervals of positive integers $I'_1 \subseteq I_1 , \ldots, I'_l \subseteq I_l $ with
$|I'_i| \geq |I_i| - 2P$, $i \in [l]$, so that 
\[ Ff(\theta) = \sum_{i = 1}^l A_i \sum_{r \in I''_i} \cos(r\theta) + \tilde{E}(\theta),\]
where $\tilde{E}$ is a cosine polynomial with $|\tilde{E}(\theta)|_{\infty} \leq K + 4PMl$.
\end{claim} 
\emph{Proof of Claim :}
For each $i \in [l]$, define $I'_i$ to be $I_i$ with the first and last $P$ elements removed. Hence $|I'_i| \geq |I_i| - 2P$. We now use the identity $2\cos(a\theta)\cos(b\theta) = \cos((a+b)\theta) + \cos((a-b)\theta)$ to write the product $Ff_i$ as a sum of cosines. In particular,
\begin{align*} Ff_i(\theta) &= \left( \frac{1}{P}\sum_{r=0}^{P-1} \cos(r\theta) \right) \left( \sum_{r \in I_i } 2\hat{f}_i(r)\cos(r\theta) \right) \\
&= \sum_{r \in I'_i }\left(  2\hat{f}_i(r)/P + \sum_{a = 1}^{P-1}\hat{f}_i(r+a)/P + \sum_{a = 1}^{P-1}\hat{f}_i(r-a)/P \right) \cos(r\theta) + E_i(\theta) \\
&= \sum_{r \in I'_i }\left(  \frac{2P}{p_i}\frac{1}{P}\sum_{a = 0}^{p_i-1}\hat{f}_i(r+a) \right) \cos(r\theta) + E_i(\theta) \\
&= A_i\sum_{r \in I'_i }\cos(r\theta) + E_i(\theta).
\end{align*} Here, $E_i$ is a function of the shape $E_i(\theta) = \sum_{r \in I_i\setminus I'_i} D_r\cos(r\theta)$, where $D_r$ is a real number satisfying $|D_r| \leq 2M$, for all $r \in I_i'\setminus I_i$. Therefore $|E_i|_{\infty} \leq 4PM$. To finish the proof of the claim, we simply write $\tilde{E} = \sum_{i=1}^l E_i + E $ and observe that
\begin{align*}
Ff &=  Ff_1 + \cdots + Ff_l + FE \\
&=  \sum_{i=1}^l A_i\sum_{r \in I_i' } \cos(r\theta)  + \sum_{i=1}^l E_i + FE\\
&=  \sum_{i=1}^l A_i\sum_{r \in I_i' } \cos(r\theta) + \tilde{E}. 
\end{align*} Let us point out that $\tilde{E}$ is a real-valued function with $ |\tilde{E}|_{\infty} \leq 4PMl + K $. \qed

We now prepare to apply Lemma~\ref{lem:ZerosForSumsOfD} and finish the proof of Lemma~\ref{lem:MainStructureLemma}. From (\ref{equ:DefOfAi}) we have $|A_i| \leq 2M$, for all $i \in [l]$, and since $2\hat{f_i}(r) \in \mathbb{Z}$, we have that $|A_i| \geq 1/p_i \geq 1/P$, whenever $A_i \not= 0 $. So set $g(\theta) = F(\theta)f(\theta)$ and $Z = (|f(0)| - |\tilde{E}|_{\infty})/M \geq |f(0)| - 8P^2l + K/M $. Note that $g(0) = f(0)$ and apply Lemma~\ref{lem:ZerosForSumsOfD} to learn that $g$ has at least
\begin{align*}
s = \frac{\log(Z)}{240P(20lM + \pi|\tilde{E}|_{\infty})} -1 &\geq  \frac{\log(Z)}{240P(20lM + 2^4PMl + 4K)} -1  \\
&\geq \frac{\log(Z)}{2^{14}P^2Ml + 2^{10}PK} -1
\end{align*} roots. Since $g = Ff$ and $F$ is an exponential polynomial of degree $ \leq 2P$, it follows that $f$ has at least $s - 2P$ roots. This completes the proof of the Lemma. 
\end{proof}

Now we have established all of the ingredients that shall go into the proof of Theorem~\ref{thm:main2}. All that remains is to string these results together to obtain a general bound. 

\section{Proofs of Theorems~\ref{thm:main} and \ref{thm:main2}} \label{sec:ProofsOfThms}
\paragraph{} To complete the proof of Theorems~\ref{thm:main} and \ref{thm:main2}, we need only to sequentially evoke our various lemmas, while keeping track of our parameters. We keep track of the explicit constants here, unlike in the statement of the theorem, in case there is any interest for specific applications. 

\emph{Proof of Theorems \ref{thm:main} and \ref{thm:main2} : } 
Let $f$ be as in the statement of Theorem~\ref{thm:main2}. Set $N = |f(0)|$. Notice that it suffices to prove the theorem for polynomials with coefficients in $\mathbb{Z}$, so we assume that $R \subseteq \mathbb{Z}$. For a contradiction, suppose that $f$ has $k$ roots, where $k$ is less than 
\[ X =  \left\lfloor \frac{\log_{(3)} N - 8\log 2M(R) \cdot \log_{(3)}^{1/2} N}{2^{17} \log (|R|+1) \cdot\log^2_{(5) }N }   \right\rfloor^{1/2} .
\] Note $X \leq (2^{-17}\log_{(3)} N )^{1/2}$. Also, we may assume that $|R| \leq \log_{(2)} N$, $M(R) \leq \log\log N$, and that $N \geq 2^{2^{2^{2^{17}}}}$, otherwise the statement of the theorem is trivial. In what follows, we use this lower bound on $N$ repeatedly and implicitly.

Now let $P$ be the companion polynomial of $f$, as defined at (\ref{equ:CompanionPoly}). From Lemma~\ref{prop:CompanionPoly} we know that $Pf \geq 0$ and therefore we have $\int Pf \geq \int |Pf|$. Now define $h = 2f$  so that $\hat{h}(r) \in R'$, where $R'$ is some set with $R' \subseteq \mathbb{Z}$, $M(R') \leq 2M(R)$ and $|R'| \leq |R|+1$. We apply Lemma~\ref{lem:ReductionToStructure} to $h$ to find a non-zero exponential polynomial $Q$ such that
\[ D = \deg(Q) < 2^{12}(k\log\log(4k+3))^{2} \leq \log_{(3)} N, \]  and 
\[ \left|\{ r \in \mathbb{Z} :  \widehat{Qh}(r) \not= 0 \}\right| \leq (\log N)^{1/2}.
\] Of course $\widehat{Qh}(r) = 0$ if and only if $\widehat{Qf}(r) \not= 0$ and so we may apply Lemma~\ref{lem:ReductionToStructure2} to write $f$ in the form 
\[ f(\theta) = \sum_{i = 1}^l f_i(\theta) + E(\theta),\]
for some $l \leq \log^{1/2} N$, where $f_1,\ldots,f_l,E$ are cosine polynomials, and there exist finite, disjoint intervals $I_1,\ldots,I_l$ so that, for $i\in [l]$, $\hat{f}_i(r)$ is supported on $I_i \cup -I_i$ and the sequence $(\hat{f}_i(r))_{r \in I_i}$ is periodic with period at most 
\begin{align*}
 \lfloor 16D\log_{(2)} (D+3) \rfloor! &\leq 2^{16D\log_{(2)} (D+3)\log(16D\log_{(2)} (D+3))} \\
&\leq  2^{16(\log_{(3)} N)^4} 
\end{align*} We also have 
\begin{align*}
|E|_{\infty} &\leq 8\left|\{ r : \widehat{Qf}(r) \not= 0 \}\right| M(R)(|R|^D + D) \\
&\leq 8(\log^{1/2} N )(\log_{(2)} N )2^{2D\log|R|}  \\
&\leq (\log^{1/2} N) \cdot 2^{16(\log_{(3)} N)^4}  .
\end{align*} 
We now apply Lemma~\ref{lem:MainStructureLemma} with parameters $l \leq \log^{1/2} N$, $P =  2^{16(\log_{(3)} N)^4} $, $K = (\log^{1/2} N) 2^{16(\log_{(3)} N)^4}$ and $M = M(R) \leq \log\log N$. Let $Y$ be as in the statement of Lemma~\ref{lem:MainStructureLemma} and note that 
\[ Y = |f(0)| - 4Pl + K/M \geq N/2 . \] We also have
\[ 2^{14}P^2Ml + 2^{10}KP \leq 2^{2^{10}(\log_{(3)} N)^4}\log^{1/2} N    .
\] Thus the number of zeros of $f$ is at least
\begin{align*}
\frac{\log N/2}{2^{14}P^2Ml + 2^{10}KP} - 2P - 1 &\geq \frac{\log^{1/2} N }{2^{2^{10}(\log_{(3)} N)^4}} - 2^{32(\log_{(3)} N)^4} \\
&\geq \frac{\log^{1/2} N }{2^{2^{11}(\log_{(3)} N)^4}} \\
&\geq \log_{(3)}^{1/2} N   \\
&> X 
\end{align*}
But this contradicts our assumption that the number of roots of $f$ was at most $X$. This completes the proof.  \qed
\section{Acknowledgements}
I should like to thank B\'{e}la Bollob\'{a}s for direction and comments; Paul Balister, Tam\'{a}s Erd\'{e}lyi, Robert Morris and Kamil Popielarz for carefully reading and commenting on an earlier draft of this paper; and Tomas Ju\v{s}kevi\v{c}ius for introducing me to the problems of Littlewood and for many related conversations. Part of this research was conducted while visiting Cambridge University. I am grateful for the hospitality of the combinatorics group and Trinity College, Cambridge.

\Addresses
\end{document}